\newtheorem{theorem}{Theorem}
\newtheorem{corollary}[theorem]{Corollary}
\renewenvironment{proof}{{\bf Proof:}}{\hfill\rule{2mm}{2mm}}
\newcommand{\inv}{^{-1}}                            % inverse operator
\newcommand{\sminus}{\backslash}                    % set minus
\newcommand{\R}{\mathbb{R}}                         % real numbers
\newcommand{\Z}{\mathbb{Z}}                         % integers
\newcommand{\e}{\varepsilon}                        % \varepsilon
\newcommand{\X}{\mathcal{X}}                        % X domain
\newcommand{\E}{\mathop{\mathbb{E}}}                % expected value
\newcommand{\F}{\mathcal{F}}                        % famimly of functions
\newcommand{\Var}{\mathbb{V}}                       % variance
\newcommand{\Cov}{\operatornamewithlimits{Cov}}     % covariance
\renewcommand{\phi}{\varphi} % \varphi looks so much better than \phi...
\renewcommand{\tilde}{\widetilde}
\title{Efficient Nonparametric Smoothness Estimation} % with Applications to Two-Sample Testing}
\author{
  Shashank Singh \\
  Statistics \& Machine Learning Departments \\
  Carnegie Mellon University \\
  Pittsburgh, PA 15213 \\
  \texttt{sss1@andrew.cmu.edu} \\
  \And
  Simon S. Du \\
  Machine Learning Departments \\
  Carnegie Mellon University \\
  Pittsburgh, PA 15213 \\
  \texttt{ssdu@cs.cmu.edu} \\
  \And
  Barnab\'as P\'oczos \\
  Machine Learning Departments \\
  Carnegie Mellon University \\
  Pittsburgh, PA 15213 \\
  \texttt{bapoczos@cs.cmu.edu} \\
}
\begin{document}

\maketitle

\begin{abstract}
Sobolev quantities (norms, inner products, and distances) of probability
density functions are important in the theory of nonparametric statistics, but
have rarely been used in practice, partly due to a lack of practical estimators.
They also include, as special cases, $L^2$ quantities which are used in many
applications. We propose and analyze a family of estimators for Sobolev
quantities of unknown probability density functions. We bound the bias and
variance of our estimators over finite samples, finding that they are generally
minimax rate-optimal. Our estimators are significantly more computationally
tractable than previous estimators, and exhibit a statistical/computational
trade-off allowing them to adapt to computational constraints. We also draw
theoretical connections to recent work on fast two-sample testing. Finally, we
empirically validate our estimators on synthetic data.
\end{abstract}

\section{Introduction}
\label{sec:introduction}
$L^2$ quantities (i.e., inner products, norms, and distances) of continuous
probability density functions are important information theoretic quantities
with many applications in machine learning and signal processing. For example,
estimates of the $L^2$ norm as can be used for
goodness-of-fit testing \citet{goria05new},
image registration and texture classification \citep{hero2002aes},
and parameter estimation in semi-parametric models \citet{Wolsztynski85minimum}.
$L^2$ inner products estimates can be used with linear or polynomial kernels to
generalize kernel methods to inputs which are distributions rather than
numerical vectors. \citep{poczos12kernelsForImages} Estimators of $L^2$
distance have been used for two-sample testing
\citep{anderson94L_2TwoSampleTest, pardo05divergenceMeasures}, transduction
learning \citep{quadrianto09distributionMatching}, and machine learning on
distributional inputs \citep{poczos12divergenceApplication}.
\citet{principe10RenyiEntropy} gives further applications of $L^2$ quantities to
adaptive information filtering, classification, and clustering.

$L^2$ quantities are a special case of less-well-known \emph{Sobolev
quantities}. Sobolev norms measure \emph{global smoothness} of a function in
terms of integrals of squared derivatives. For example, for a non-negative
integer $s$ and a function $f : \R \to \R$ with an $s^{th}$ derivative
$f^{(s)}$, the $s$-order Sobolev norm $\|\cdot\|_{H^s}$ is given by
$\|f\|_{H^s} = \int_\R \left( f^{(s)}(x) \right)^2 \, dx$
(when this quantity is finite). See Section \ref{sec:notation} for more general
definitions, and see \cite{leoni09Sobolev} for an introduction to Sobolev spaces.

Estimation of general Sobolev norms has a long history in nonparametric
statistics (e.g.,
\citet{schweder75windowVariance,ibragimov78nonparametricFunctionals,hall87SobolevNorms,bickel88squaredDensityDerivatives})
This line of work was motivated by the role of Sobolev norms in many semi- and
non-parametric problems, including density estimation, density functional
estimation, and regression, (see \citet{Tsybakov:2008:INE:1522486}, Section
1.7.1) where they dictate the convergence rates of estimators.
% Sobolev norms also appear in the asymptotic variances of test statistics such
% as the rank statistics of \citet{lehmann63nonparametricCI}.
Despite this, to our
knowledge, these quantities have \emph{never} been studied in real data,
leaving an important gap between the theory and practice of nonparametric
statistics. We suggest this is in part due a lack of \emph{practical} estimators
for these quantities. For example, the only one of the above estimators that is
statistically minimax-optimal \citep{bickel88squaredDensityDerivatives} is
extremely difficult to compute in practice, requiring numerical integration over
each of $O(n^2)$ different kernel density estimates, where $n$ denotes the
sample size. We know of no estimators previously proposed for Sobolev inner
products and distances.

The {\bf main goal of this paper} is to propose and analyze a family of
computationally and statistically efficient estimators for Sobolev inner
products, norms, and distances. Our specific contributions are:
\begin{enumerate}
\item
We propose a family of nonparametric estimators for Sobolev norms, inner
products, and distances (Section \ref{sec:estimator}).
% For Sobolev inner products and distances, our estimators are the first
% proposed.
\item
We analyze the bias and variance of the estimators. Assuming the underlying
density functions have bounded support in $\R^D$ and lie in a Sobolev class of
sufficient smoothness parametrized by $s'$, we show that the estimator for
Sobolev quantities of order $s < s'$ converges to the true value at the
``parametric'' rate of $O(n\inv)$ in mean squared error when
$s' \geq 2s + D/4$, and at a slower rate of
$O \left( n^{\frac{8(s - s')}{4s' + D}} \right)$ otherwise.
(Section \ref{sec:finite_sample_bounds}).
\item
We derive asymptotic distributions for our estimators, and we use these to
derive tests for the general statistical problem of two-sample testing. We also
draw theoretical connections between our test and the recent work on
nonparametric two-sample testing. (Section \ref{subsec:two_sample_testing}).
% \citep{chwialkowski15fastTwoSample} and also empirically
% validate these tests on simulated and real data.
\item
We validate our theoretical results on simulated data. (Section \ref{sec:empirical}).
% \item
% We use our estimators to study the smoothness properties of density functions
% underlying real data sets. To our knowledge, this is the first empirical study
% of these properties of data that have long- and well-known theoretical
% importance in nonparametric statistics.
\end{enumerate}

In terms of mean squared error, minimax lower bounds matching our convergence
rates over Sobolev or H\"older smoothness classes have been shown by
\citet{krishnamurthy2014renyiAndFriends} for $s = 0$ (i.e., $L^2$ quantities),
and \citet{birge95functionalLowerBounds} for Sobolev norms with
integer $s$. We conjecture but do not prove that our estimator is minimax
rate-optimal for all Sobolev quantities and $s \in [0, \infty)$.

As described in Section \ref{sec:practical}, our estimators are computable in
$O(n^{1 + \e})$ time using only basic matrix operations, where $n$ is the
sample size and $\e \in (0, 1)$ is a tunable parameter trading
statistical and computational efficiency; the smallest value of $\e$ at which
the estimator continues to be minimax rate-optimal approaches $0$ as we assume
more smoothness of the true density.

% \subsection*{Organization}
% Section \ref{sec:notation} formally states the estimation problems we consider
% and provides notation used throughout the paper. Section \ref{sec:related_work}
% relates relates our work to prior work on estimating functionals of probability
% density functions. Sections \ref{sec:estimator} and
% \ref{sec:finite_sample_bounds} contain our proposed estimator and its
% finite-sample guarantees. In Section \ref{sec:asymptotic_results}, we derive
% asymptotic distributions of our estimators. Finally, Section
% \ref{sec:practical} provides some practical considerations regarding
% computation and parameter selection, followed by experiments in Section
% \ref{sec:empirical} and a discussion of connections to recent work in
% two-sample testing in Section \ref{subsec:two_sample_testing}.
% 
\section{Problem setup and notation}
\label{sec:notation}

Let $\X = [-\pi, \pi]^D$ and let $\mu$ denote the Lebesgue measure on $\X$. For
$D$-tuples $z \in \Z^D$ of integers, let $\psi_z \in L^2 = L^2(\X)$
\footnote{We suppress dependence on $\X$; all function spaces are over $\X$
except as discussed in Section \ref{sec:unbounded}.}
defined by
$\psi_z(x) = e^{-i \langle z, x \rangle}$
for all $x \in \X$ denote the $z^{th}$ element of the
$L^2$-orthonormal Fourier basis, and, for $f \in L^2$, let
$\tilde f(z)
  := \langle \psi_z, f \rangle_{L^2}
  = \int_\X \psi_z(x) \overline{f(x)} \, d\mu(x)$
denote the $z^{th}$ Fourier coefficient of $f$.
\footnote{Here, $\langle \cdot, \cdot \rangle$ denotes the dot product on
$\R^D$. For a complex number $c = a + bi$, $\overline{c} = a - bi$ denotes
the complex conjugate of $c$, and
$|c| = \sqrt{c\overline{c}} = \sqrt{a^2 + b^2}$ denotes the modulus of $c$.}
For any $s \in [0, \infty)$, define the Sobolev space
$H^s = H^s(\X) \subseteq L^2$ of order $s$ on $\X$ by
\footnote{When $D > 1$, $z^{2s} = \prod_{j = 1}^D z_j^{2s}$. For $z < 0$,
$z^{2s}$ should be read as $(z^2)^s$, so that $z^{2s} \in \R$ even when
$2s \notin \Z$. In the $L^2$ case, we use the convention that $0^0 = 1$.}
\begin{equation}
H^s
  = \left\{ f \in L^2 :
            \sum_{z \in \Z^D} z^{2s} \left| \tilde f(z) \right|^2 < \infty
    \right\}.
\label{eq:def_sobolev_space}
\end{equation}
Fix a known $s \in [0, \infty)$ and a unknown probability density functions
$p, q \in H^s$, and suppose we have $n$ IID samples $X_1,...,X_n \sim p$ and
$Y_1,\dots,Y_n \sim q$ from each of $p$ and $q$. We are interested in
estimating the inner product
\begin{equation}
\langle p, q \rangle_{H^s}
  := \sum_{z \in \Z^D} z^{2s} \tilde p(z) \overline{\tilde q(z)}
  \quad \mbox{ defined for all } \quad p, q \in H^s.
\label{eq:def_sobolev_inner_product}
\end{equation}
Estimating the inner product gives an estimate for the (squared) induced norm
and distance, since
\footnote{$\|p\|_{H^s}$ is \emph{pseudonorm} on $H^s$ because it fails to
distinguish functions identical almost everywhere up to additive constants; a
combination of $\|p\|_{L^2}$ and $\|p\|_{H^s}$ is used when a proper norm is
needed. However, since probability densities integrate to $1$,
$\|\cdot - \cdot\|_{H^s}$ \emph{is} a proper metric on the subset of
(almost-everywhere equivalence classes of) probability density functions in
$H^s$, which is important for two-sample testing  (see Section
\ref{subsec:two_sample_testing}). For simplicity, we use the terms ``norm'',
``inner product'', and ``distance'' for the remainder of the paper.}
\begin{equation}
\|p\|_{H^s}^2
 := \sum_{z \in \Z^D} z^{2s} \left| \tilde p(z) \right|^2
  = \langle p, p \rangle_{H^s}
  \quad \mbox{ and } \quad
  \|p - q\|_{H^s}^2
  = \|p\|_{H^s}^2 - 2 \langle p, q \rangle_{H^s} + \|q\|_{H^s}^2.
\label{eqs:sobolev_quantities}
\end{equation}
Since our theoretical results assume the samples from $p$ and $q$ are
independent, when estimating $\|p\|_{H^s}^2$, we split
the sample from $p$ in half to compute two independent estimates of $\tilde p$,
although this may not be optimal in practice.

For a more classical intuition, we note that, in the case $D = 1$ and
$s \in \{0,1,2,\dots\}$, (via Parseval's identity and the identity
$\tilde{f^{(s)}}(z) = (iz)^s \tilde f(z)$), that one can show the following:
$H^s$ includes the subspace of $L^2$ functions with at least $s$ derivatives in $L^2$ and,
if $f^{(s)}$ denotes the $s^{th}$ derivative of $f$
\begin{equation}
\|f\|_{H^s}^2
  = 2\pi \int_\X \left( f^{(s)}(x) \right)^2 \, dx
  = 2\pi \left\| f^{(s)} \right\|_{L^2}^2,
  \quad \forall f \in H^s.
\label{eq:sobolev_derivative_norm}
\end{equation}
In particular, when $s = 0$, $H^s = L^2$, $\|\cdot\|_{H^s} = \|\cdot\|_{L^2}$,
and $\langle \cdot, \cdot \rangle_{H^s} = \langle \cdot, \cdot \rangle_{L^2}$.
As we describe in the supplement, equation \eqref{eq:sobolev_derivative_norm}
and our results generalizes trivially to weak derivatives, as well as to
non-integer $s \in [0, \infty)$ via a notion of fractional derivative.

\subsection{Unbounded domains}
\label{sec:unbounded}
A notable restriction above is that $p$ and $q$ are supported in
$\X := [-\pi, \pi]^D$. In fact, our estimators and tests are well-defined and
valid for densities supported on arbitrary subsets of $\R^D$. In this case,
they act on the $2\pi$-periodic summation
$p_{2\pi} : [-\pi, \pi]^D \to [0, \infty]$ defined for $x \in \X$ by
$p_{2\pi}(x) := \sum_{z \in \Z^D} p(x + 2 \pi z)$, which is itself a
probability density function on $\X$. For example, the estimator for
$\|p\|_{H^s}$ will instead estimate $\|p_{2\pi}\|_{H^s}$, and the two-sample
test for distributions $p$ and $q$ will attempt to distinguish $p_{2\pi}$ from
$q_{2\pi}$. In most cases, this is not problematic; for example, for most
realistic probability densities, $p$ and $p_{2\pi}$ have similar orders of
smoothness, and $p_{2\pi} = q_{2\pi}$ if and only if $p = q$. However, there
are (meagre) sets of exceptions; for example, if $q$ is a translation of $p$ by
exactly $2\pi$, then $p_{2\pi} = q_{2\pi}$, and one can craft a highly
discontinuous function $p$ such that $p_{2\pi}$ is uniform on $\X$.
\citep{zygmund02trigSeries} These exceptions make it difficult to extend
theoretical results to densities with arbitrary support, but in practice, they
are fixed simply by randomly rescaling the data (similar to the approach of
\citet{chwialkowski15fastTwoSample}). If the densities have (known) bounded
support, they can simply be shifted and scaled to be supported on $\X$.

\section{Related work}
\label{sec:related_work}
There is a large body of work on estimating nonlinear functionals of probability
densities, with various generalizations in terms of the class of functionals
considered. Table \ref{tab:related_functionals} gives a subset of such work, for
functionals related to Sobolev quantities. As shown in Section
\ref{sec:notation}, the functional form we consider is a strict generalization
of $L^2$ norms, Sobolev norms, and $L^2$ inner products. It overlaps with, but
is neither a special case nor a generalization of the remaining functional forms
in the table.

\begin{table}
\centering
{\renewcommand{\arraystretch}{1.5} % increase vertical space around equations
\begin{tabular}{|c|c|p{42mm}|}
\hline
Functional Name & Functional Form & References \\
\hline
$L^2$ norms & $\|p\|_{L^2}^2 = \int \left( p(x) \right)^2 \, dx$ &
\citet{schweder75windowVariance,gine08L2Norm} \\
\hline
(Integer) Sobolev norms & $\|p\|_{H^k}^2 = \int \left( p^{(k)}(x) \right)^2 \, dx$ & \citet{bickel88squaredDensityDerivatives} \\
\hline
Density functionals & $\int \phi(x, p(x)) \, dx$ & \citet{laurent92integralFunctionals, laurent96integralFunctionals} \\
\hline
Derivative functionals & $\int \phi(x, p(x), p'(x), \dots, p^{(k)}(x)) \, dx$ & \citet{birge95functionalLowerBounds} \\
\hline
$L^2$ inner products & $\langle p_1, p_2 \rangle_{L^2} = \int p_1(x) p_2(x) \, dx$ & \citet{krishnamurthy2014renyiAndFriends, krishnamurthy2014L2Divergence} \\
\hline
Multivariate functionals & $\int \phi(x, p_1(x), \dots,p_k(x)) \, dx$ & \citet{singh14densityFunctionals, kandasamy15vonMises} \\
\hline
\end{tabular}
}
\caption{Some related functional forms for which estimators for which
nonparametric estimators have been developed and analyzed. $p,p_1,...,p_k$ are
unknown probability densities, from each of which we draw $n$ IID samples,
$\phi$ is a known real-valued measurable function, and $k$ is a non-negative
integer.}
\label{tab:related_functionals}
\end{table}
% 
% \begin{enumerate}
% \item
% $L^2$ norms, functionals of the form
% \begin{equation}
% \|p\|_{L^2}^2 = \int \left( p(x) \right)^2 \, dx.
% \label{eq:L_2_norm_form}
% \end{equation}
% This is studied by \citet{gine08L2Norm}. 
% \item
% (Integer) Sobolev norms, functionals of the form
% \begin{equation}
% \|p\|_{H^k}^2 = \int \left( p^{(k)}(x) \right)^2 \, dx.
% \label{eq:integer_sobolev_norm_form}
% \end{equation}
% This is studied by \citet{bickel88squaredDensityDerivatives}.
% \item
% General density functionals, of the form
% \begin{equation}
% T_0(p) = \int \phi(x, p(x)) \, dx.
% \label{eq:general_functional_form}
% \end{equation}
% This is studied by
% \citet{laurent92integralFunctionals, laurent96integralFunctionals}.
% \item
% General density derivative functionals, of the form
% \begin{equation}
% T_k(p) = \int \phi(x, p(x), p'(x), \dots, p^{(k)}(x)) \, dx.
% \label{eq:general_functional_derivatives_form}
% \end{equation}
% This is studied by \citet{birge95functionalLowerBounds}.
% \item
% $L^2$ inner products, functionals of the form
% \begin{equation}
% \langle p_1, p_2 \rangle_{L^2} = \int p_1(x) p_2(x) \, dx.
% \label{eq:L_2_inner_product_form}
% \end{equation}
% This is studied by
% \citet{krishnamurthy2014renyiAndFriends, krishnamurthy2014L2Divergence}.
% \item
% General multivariate density functionals, of the form
% \begin{equation}
% T(p) = \int \phi(x, p_1(x), \dots,p_k(x)) \, dx.
% \label{eq:general_multivariate_functional_form}
% \end{equation}
% This is studied by
% \citet{singh14densityFunctionals, kandasamy15vonMises}.
% \end{enumerate}

Nearly all of the above approaches compute an optimally smoothed kernel density
estimate and then perform bias corrections based on Taylor series expansions of
the functional of interest. They typically consider distributions with
densities that are $\beta$-H\"older continuous and satisfy periodicity
assumptions of order $\beta$ on the boundary of their support, for some
constant $\beta > 0$ (see, for example, Section 4 of
\citet{krishnamurthy2014renyiAndFriends} for details of these assumptions). The
Sobolev class we consider is a strict superset of this H\"older class,
permitting, for example, certain ``small'' discontinuities.
% Specifically, if, for
% some $\beta > 0$, $C^\beta$ is the class of densities that are
% $\beta$-H\"older continuous and satisfy periodic boundary conditions of order
% $\beta$, then $C^\beta \subsetneq H^\beta$. On the other hand, H\"older
% continuity implies continuity, where Sobolev functions may be discontinuous.
% For example, if $\alpha \in (0, \frac{D}{2} - s)$, for $f(x) = |x|^{-\alpha}$,
% $f \in H^s(\X)$, even though $f$ ``blows up'' at $x = 0$.
In this regard, our results are slightly more general than most of these prior
works.

Finally, there is much recent work on estimating entropies, divergences, and
mutual informations, using methods based on kernel density estimates
\citep{singh14RenyiDivergence,singh14densityFunctionals,moon16improvingConvergence,krishnamurthy2014renyiAndFriends,krishnamurthy2014L2Divergence,kandasamy15vonMises}
or $k$-nearest neighbor statistics
\citep{leonenko08RenyiEntropy,poczos11alphaDivergence,moon14divergencesEnsemble,moon14divergencesConfidence}.
In contrast, our estimators are more similar to orthogonal series density
estimators, which are computationally attractive because they require no
pairwise operations between samples. However, they require quite different
theoretical analysis; unlike prior work, our estimator is constructed and
analyzed entirely in the frequency domain, and then related to the data domain
via Parseval's identity. We hope our analysis can be adapted to analyze new,
computationally efficient information theoretic estimators.

\section{Motivation and construction of our estimator}
\label{sec:estimator}
For a non-negative integer parameter $Z_n$ (to be specified later), let
\begin{equation}
p_n
  := \sum_{\|z\|_\infty \leq Z_n} \tilde p(z) \psi_z
\quad \mbox{ and } \quad
q_n
  := \sum_{\|z\|_\infty \leq Z_n} \tilde q(z) \psi_z
  \quad \mbox{ where } \quad
\|z\|_\infty := \max_{j \in \{1,\dots,D\}} z_j
\label{eq:def_trig_approx}
\end{equation}
denote the $L^2$ projections of $p$ and $q$, respectively, onto the linear
subspace spanned by the $L^2$-orthonormal family
$\F_n := \{\psi_z : z \in \Z^D, |z| \leq Z_n\}$. Note that, since
$\tilde{\psi_z}(y) = 0$ whenever $y \neq z$, the Fourier basis has the special
property that it is orthogonal in $\langle \cdot, \cdot \rangle_{H^s}$ as well.
Hence, since $p_n$ and $q_n$ lie in the span of $\F_n$ while $p - p_n$ and
$q - q_n$ lie in the span of $\{\psi_z : z \in \Z\} \sminus \F_n$,
$\langle p - p_n, q_n \rangle_{H^s} = \langle p_n, q - q_n \rangle_{H^s} = 0$.
Therefore,
\begin{align}
\langle p, q \rangle_{H^s}
\notag
& = \langle p_n, q_n \rangle_{H^s}
  + \langle p - p_n, q_n \rangle_{H^s}
  + \langle p_n, q - q_n \rangle_{H^s}
  + \langle p - p_n, q - q_n \rangle_{H^s} \\
\label{eq:orthogonal_decomposition}
& = \langle p_n, q_n \rangle_{H^s}
  + \langle p - p_n, q - q_n \rangle_{H^s}.
\end{align}
We propose an unbiased estimate of
$S_n
  := \langle p_n, q_n \rangle_{H^s}
  = \sum_{\|z\|_\infty \leq Z_n} z^{2s} \tilde p_n(z) \overline{\tilde q_n(z)}$.
Notice that Fourier coefficients of $p$ are the expectations
$\tilde p(z) = \E_{X \sim p} \left[ \psi_z(X) \right]$. Thus,
$\hat p(z) := \frac{1}{n} \sum_{j = 1}^n \psi_z(X_j)$ and
$\hat q(z) := \frac{1}{n} \sum_{j = 1}^n \psi_z(Y_j)$ are independent unbiased
estimates of $\tilde p$ and $\tilde q$, respectively. Since $S_n$ is bilinear
in $\tilde p$ and $\tilde q$, the plug-in estimator for $S_n$ is unbiased. That
is, {\bf our estimator} for $\langle p, q \rangle_{H^s}$ is
\begin{equation}
\hat S_n
  := \sum_{\|z\|_\infty \leq Z_n} z^{2s} \hat p(z) \overline{\hat q(z)}.
\label{eq:sobolev_final_estimator}
\end{equation}

\section{Finite sample bounds}
\label{sec:finite_sample_bounds}
Here, we present our main theoretical results, bounding the bias, variance, and
mean squared error of our estimator for finite $n$.

By construction, our estimator satisfies
\[\E \left[ \hat S_n \right]
  = \sum_{\|z\|_\infty \leq Z_n} z^{2s} \E \left[ \hat p(z) \right]
                                 \overline{\E \left[ \hat q(z) \right]}
  = \sum_{\|z\|_\infty \leq Z_n} z^{2s} \tilde p_n(z) \overline{\tilde q_n(z)}
  = S_n.\]
Thus, via \eqref{eq:orthogonal_decomposition} and Cauchy-Schwarz, the
bias of the estimator $\hat S_n$ satisfies
\begin{equation}
\left| \E \left[ \hat S_n \right] - \langle p, q \rangle_{H^s} \right|
  = \left| \langle p - p_n, q - q_n \rangle_{H^s} \right|
  \leq \sqrt{\left\| p - p_n \right\|_{H^s}^2
             \left\| q - q_n \right\|_{H^s}^2}.
\label{eq:_bias_intermediate}
\end{equation}
$\left\| p - p_n \right\|_{H^s}$ is the error of approximating $p$ by an
order-$Z_n$ trigonometric polynomial, a classic problem in approximation
theory, for which Theorem 2.2 of \citet{kreiss79Fourierstability} shows:
\begin{equation}
\mbox{ if } p \in H^{s'} \mbox{ for some } s' > s,
  \quad \mbox{ then } \quad
  \left\| p - p_n \right\|_{H^s}
  \leq \|p\|_{H^{s'}}Z_n^{s - s'}.
\label{ineq:trig_approximation_error}
\end{equation}
In combination with \eqref{eq:_bias_intermediate}, this implies the following
bound on the bias of our estimator:
\begin{theorem}
{\bf (Bias bound)}
If $p, q \in H^{s'}$ for some $s' > s$, then, for
$C_B := \|p\|_{H^{s'}} \|q\|_{H^{s'}}$,
\begin{equation}
\left| \E \left[ \hat S_n \right] - \langle p, q \rangle_{H^s} \right|
  \leq C_B Z_n^{2(s - s')}
\label{eq:overall_bias}
\end{equation}
\label{thm:bias_bound}
\end{theorem}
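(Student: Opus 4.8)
The plan is to chain together the two inequalities that the paper has already set up immediately before the statement. First, I would recall that the estimator $\hat S_n$ is unbiased for $S_n = \langle p_n, q_n \rangle_{H^s}$, so by the orthogonal decomposition \eqref{eq:orthogonal_decomposition}, the bias equals exactly the ``tail'' inner product $\langle p - p_n, q - q_n \rangle_{H^s}$. Applying Cauchy--Schwarz for the inner product $\langle \cdot, \cdot \rangle_{H^s}$ gives \eqref{eq:_bias_intermediate}, namely $\left| \E[\hat S_n] - \langle p, q \rangle_{H^s} \right| \leq \| p - p_n \|_{H^s} \| q - q_n \|_{H^s}$.

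Next, I would invoke the trigonometric approximation bound \eqref{ineq:trig_approximation_error} (Theorem 2.2 of \citet{kreiss79Fourierstability}) separately for $p$ and for $q$: since $p, q \in H^{s'}$ with $s' > s$, we have $\| p - p_n \|_{H^s} \leq \|p\|_{H^{s'}} Z_n^{s - s'}$ and likewise $\| q - q_n \|_{H^s} \leq \|q\|_{H^{s'}} Z_n^{s - s'}$. Multiplying these two bounds and substituting into the Cauchy--Schwarz estimate yields
\[
\left| \E \left[ \hat S_n \right] - \langle p, q \rangle_{H^s} \right|
  \leq \|p\|_{H^{s'}} \|q\|_{H^{s'}} Z_n^{2(s - s')},
\]
which is exactly \eqref{eq:overall_bias} with $C_B = \|p\|_{H^{s'}} \|q\|_{H^{s'}}$.

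Since all the ingredients are already assembled in the excerpt, there is essentially no obstacle; the only point requiring a moment's care is confirming that $\langle \cdot, \cdot \rangle_{H^s}$ genuinely satisfies the Cauchy--Schwarz inequality on the relevant subspace — this holds because $\langle \cdot, \cdot \rangle_{H^s}$ is a positive semidefinite symmetric bilinear form (it is a weighted $\ell^2$ inner product on Fourier coefficients), and Cauchy--Schwarz holds for any such form. One should also note that $p - p_n$ and $q - q_n$ lie in $H^{s'} \subseteq H^s$, so all the quantities appearing are finite and the manipulations are justified. The theorem then follows by combining the displayed inequalities.
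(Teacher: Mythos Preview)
Your proposal is correct and follows exactly the paper's own argument: unbiasedness for $S_n$, the orthogonal decomposition \eqref{eq:orthogonal_decomposition}, Cauchy--Schwarz to get \eqref{eq:_bias_intermediate}, and then the trigonometric approximation bound \eqref{ineq:trig_approximation_error} applied to $p$ and $q$ separately. There is nothing to add.
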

Hence, the bias of $\hat S_n$ decays polynomially in $Z_n$, with a power
depending on the ``extra'' $s' - s$ orders of smoothness available. On the
other hand, as we increase $Z_n$, the number of frequencies at which we
estimate $\hat p$ increases, suggesting that the variance of the estimator will
increase with $Z_n$. Indeed, this is expressed in the following bound on the
variance of the estimator.

\begin{theorem}
{\bf (Variance bound)}
If $p, q \in H^{s'}$ for some $s' \geq s$, then
\begin{equation}
\Var \left[ \hat S_n \right]
  \leq 2C_1 \frac{Z_n^{4s + D}}{n^2} + \frac{C_2}{n},
\label{ineq:variance_bound}
\end{equation}
where $C_1$ and $C_2$ are the constants (in $n$)
\begin{equation}
C_1
  := \frac{2^D\Gamma(4s + 1)}{\Gamma(4s + D + 1)} \|p\|_{L^2} \|q\|_{L^2}
\label{constant:C_1}
\end{equation}
and
$C_2
  := \left( \|p\|_{H^s} + \|q\|_{H^s} \right) \|p\|_{W^{2s,4}} \|q\|_{W^{2s,4}}
      + \|p\|_{H^s}^4 \|q\|_{H^s}^4$.
\label{thm:variance_bound}
\end{theorem}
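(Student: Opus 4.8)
The plan is to write $\hat S_n$ as a two-sample U-statistic of degree $(1,1)$ and decompose its variance using the classical Hoeffding-type orthogonal decomposition (equivalently, the ANOVA / conditional-expectation decomposition). Concretely, define the kernel $h(X, Y) := \sum_{\|z\|_\infty \le Z_n} z^{2s} \psi_z(X) \overline{\psi_z(Y)}$, so that $\hat S_n = \frac{1}{n^2} \sum_{j,k} h(X_j, Y_k)$. Writing $h_1(x) := \E_Y[h(x,Y)] = \sum_z z^{2s} \psi_z(x) \overline{\tilde q(z)}$, $h_2(y) := \E_X[h(X,y)]$, and the ``fully degenerate'' part $g(x,y) := h(x,y) - h_1(x) - h_2(y) + S_n$, independence of the $X$'s from the $Y$'s makes all three pieces uncorrelated, giving
\begin{equation}
\Var[\hat S_n]
  = \frac{1}{n}\Var[h_1(X)] + \frac{1}{n}\Var[h_2(Y)] + \frac{1}{n^2}\E\left[|g(X,Y)|^2\right].
\label{eq:var_decomp_plan}
\end{equation}
The $\frac{1}{n}$ terms will furnish the $C_2/n$ contribution, and the fully-degenerate term will furnish the $Z_n^{4s+D}/n^2$ contribution.

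For the leading ($\frac{1}{n^2}$) term, I would bound $\E[|g(X,Y)|^2] \le \E[|h(X,Y)|^2]$ (dropping the projections only increases the second moment, since $g$ is the residual after projection; alternatively just expand directly). Now $\E_{X,Y}[|h(X,Y)|^2] = \sum_{z, z'} z^{2s} (z')^{2s} \tilde p(z' - z)\, \overline{\tilde p(0)}\cdots$ — more cleanly, $\E_X\!\big[\psi_z(X)\overline{\psi_{z'}(X)}\big] = \tilde p(z'-z)$, so after the $Y$-expectation too one gets a double sum $\sum_{z,z'} z^{2s}(z')^{2s}\, \overline{\tilde p(z'-z)}\,\tilde q(z'-z)$; bounding $|\tilde p(w)| \le \|p\|_{L^2}$, $|\tilde q(w)|\le\|q\|_{L^2}$ (Bessel) and crudely pairing terms reduces this to controlling $\sum_{\|z\|_\infty\le Z_n} z^{4s}$. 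This lattice sum is $\le \int_{[-Z_n,Z_n]^D} |x|^{4s}\,dx$-type quantity; in each coordinate $\sum_{|z_j|\le Z_n} z_j^{4s} \approx \frac{2 Z_n^{4s+1}}{4s+1}$, and the product over $D$ coordinates yields exactly the constant $C_1 = \frac{2^D \Gamma(4s+1)}{\Gamma(4s+D+1)}\|p\|_{L^2}\|q\|_{L^2}$ after writing $\left(\frac{2Z_n^{4s+1}}{4s+1}\right)^{\!D}$ in Gamma-function form — this is where the specific shape of $C_1$ comes from, so care with the combinatorial/integral comparison and the constant bookkeeping is needed.

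For the $\frac{1}{n}$ terms, $\Var[h_1(X)] \le \E[|h_1(X)|^2] = \E_X\big[\big|\sum_z z^{2s}\psi_z(X)\overline{\tilde q(z)}\big|^2\big]$; expanding gives $\sum_{z,z'} z^{2s}(z')^{2s}\overline{\tilde q(z)}\tilde q(z')\tilde p(z'-z)$. I'd split this: the ``diagonal'' $z=z'$ part is $\le \|q\|_{H^s}^2 \cdot (\text{something})$, and for the off-diagonal part use $|\tilde p(z'-z)| = |\widetilde{p}(z'-z)|$ together with Hölder/Young's inequality viewing it as a convolution — this is precisely where the $W^{2s,4}$ (Sobolev-$L^4$) norms enter: by Hausdorff–Young / Hölder one pairs $\sum_z |z|^{2s}|\tilde q(z)|$ against an $\ell^{4/3}$-type quantity controlled by $\|q\|_{W^{2s,4}}$, and similarly for $p$. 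I expect the main obstacle to be exactly this step: carefully organizing the double sum $\sum_{z,z'} z^{2s}(z')^{2s} \overline{\tilde q(z)}\tilde q(z')\tilde p(z'-z)$ into a convolution and applying the right Hölder exponents to land on the product $\|p\|_{W^{2s,4}}\|q\|_{W^{2s,4}}$ (with the extra $\|p\|_{H^s}+\|q\|_{H^s}$ factor coming from the diagonal terms), while keeping everything independent of $n$ and of $Z_n$ so it can be absorbed into $C_2$. Once both the $X$- and $Y$-conditional variances are bounded this way, summing \eqref{eq:var_decomp_plan} gives \eqref{ineq:variance_bound}. The $\|p\|_{H^s}^4\|q\|_{H^s}^4$ summand in $C_2$ is a slack term that appears when crudely bounding lower-order cross terms, so I would keep the estimates loose enough to collect it at the end rather than tracking it precisely throughout.
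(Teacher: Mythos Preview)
Your high-level route is a valid alternative to the paper's and, pleasantly, lands on the same sums. The paper does \emph{not} use the Hoeffding decomposition; it applies the Efron--Stein inequality, replacing one $X_\ell$ by an independent copy $X_\ell'$, expanding $|\hat S_n - \hat S_n^{(\ell)}|^2$, and summing over $\ell$. After taking expectations this produces exactly the three double sums $\sum_{y,z}(yz)^{2s}\tilde p(y-z)\tilde q(z-y)$, $\sum_{y,z}(yz)^{2s}\tilde p(y-z)\tilde q(-y)\tilde q(z)$, and $\sum_{y,z}(yz)^{2s}\tilde p(y)\tilde p(-z)\tilde q(-y)\tilde q(z)$, which coincide (up to conjugation) with your $\E|h|^2$, $\E|h_1|^2$, and $|S_n|^2$. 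So your decomposition is a clean substitute for Efron--Stein here, and arguably more transparent about why the $1/n$ and $1/n^2$ scales separate.

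Where your proposal has a real gap is in bounding those sums. For the $1/n^2$ term, the sentence ``bound $|\tilde p(w)|\le\|p\|_{L^2}$ (Bessel) and crudely pair'' does not collapse the double sum to $\sum_z z^{4s}$: a pointwise bound on $\tilde p,\tilde q$ leaves $\big(\sum_{|z|\le Z_n} z^{2s}\big)^2$, which is a factor $Z_n^{D}$ too large. The paper's argument is a change of variables $k=y-z$ that exposes $\sum_z ((k-z)z)^{2s}$ as the self-convolution $(f*f)(k)$ with $f(z)=z^{2s}1_{\{|z|\le Z_n\}}$, observes this is maximized at $k=0$ (giving $\sum_z z^{4s}$), and only then applies Cauchy--Schwarz to $\sum_k \tilde p(k)\tilde q(-k)\le\|p\|_{L^2}\|q\|_{L^2}$. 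You need that convolution-maximum step.

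For the $1/n$ term the gap is more serious. The paper does \emph{not} split diagonal/off-diagonal or use Hausdorff--Young (which in any case goes the wrong direction for $L^4$). Its key device is a fractional-derivative convolution identity (their Theorem~7): $\sum_z z^{2s}\tilde p(y-z)\tilde q(z)=\widetilde{\,p_n^{(s)}q_n^{(s)}\,}(y)$. One Cauchy--Schwarz then gives $\|q\|_{H^s}\|p_n^{(s)}q_n^{(s)}\|_{H^s}$, and a second yields the $\|p\|_{W^{2s,4}}\|q\|_{W^{2s,4}}$ factor (this is where $W^{2s,4}$, informally $\{p:(p^{(s)})^2\in H^s\}$, enters). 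Your diagonal piece is $\sum_z z^{4s}|\tilde q(z)|^2=\|q\|_{H^{2s}}^2$, which is not controlled by $\|q\|_{H^s}$, and your $\ell^{4/3}$ pairing does not obviously reproduce the $W^{2s,4}$ norm; so the convolution-theorem trick is the missing idea you would need to complete this part. The $\|p\|_{H^s}^4\|q\|_{H^s}^4$ term, incidentally, is not slack from ``cross terms'' in your sense: in the paper it is exactly the bound on the factorized sum $\big(\sum_y y^{2s}\tilde p(y)\tilde q(-y)\big)^2$, i.e.\ on $|S_n|^2$, via Cauchy--Schwarz.
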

% The proof of Theorem \ref{thm:variance_bound} is based on the Efron-Stein
% inequality \citep{efronStein81}, and hence involves bounding the sum (over $j$)
% of the expected squared change in the estimator when independently resampling a
% single sample, say $X_j$. The majority of the terms in the difference (i.e.,
% those not involving $X_j$) cancel, resulting in a product of two terms, one
% involving $X_j$ and one involving a sum over the remaining samples. Since the
% samples are IID, the expectation can be pushed inside, and the expectations of
% individual terms simply take the form $\tilde p(z)$ or $\tilde q(z)$, summed
% over certain values of $z$. The problem them becomes a functional analytic one
% of bounding certain Sobolev inner products and convolutions. Doing so involves
% using a generalization of the convolution theorem, which relates convolutions
% in the frequency domain to inner products in the sample space, as well as the
% Rellich-Kondrachov embedding theorem,
% \citep{rellich30sobolevEmbedding,evans10PDEs} which relates Sobolev norms of
% different orders. The full proof is given in the supplementary material.
% 
The proof of Theorem \ref{thm:variance_bound} is perhaps the most significant
theoretical contribution of this work. Due to space constraints, the proof is
given in the appendix. Combining Theorems \ref{thm:bias_bound} and
\ref{thm:variance_bound} gives a bound on the mean squared error (MSE) of
$\hat S_n$ via the usual decomposition into squared bias and variance:
\begin{corollary}
{\bf (Mean squared error bound)}
If $p, q \in H^{s'}$ for some $s' > s$, then
\label{corr:MSE_bound}
\begin{equation}
\E \left[ \left( \hat S_n - \langle p, q \rangle_{H^s} \right)^2 \right]
  \leq C_B^2 Z_n^{4(s - s')}
  + 2C_1 \frac{Z_n^{4s + D}}{n^2} + \frac{C_2}{n}.
\label{ineq:MSE_bound_general_Z_n}
\end{equation}
If, furthermore, we choose $Z_n \asymp n^{\frac{2}{4s' + D}}$ (optimizing the
rate in inequality \ref{ineq:MSE_bound_general_Z_n}), then
\begin{equation}
\E \left[ \left( \hat S_n - \langle p, q \rangle_{H^2} \right)^2 \right]
  \asymp n^{\max\left\{ \frac{8(s - s')}{4s' + D}, -1 \right\}}.
\label{ineq:asymptotic_MSE}
\end{equation}
\end{corollary}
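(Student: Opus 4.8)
The plan is to assemble the corollary from the two preceding theorems via the standard bias–variance decomposition, and then to carry out the scalar optimization over $Z_n$. First I would write
\[
\E\left[\left(\hat S_n - \langle p, q\rangle_{H^s}\right)^2\right]
  = \left(\E\left[\hat S_n\right] - \langle p, q\rangle_{H^s}\right)^2
  + \Var\left[\hat S_n\right],
\]
which holds for any estimator. Then I would bound the squared-bias term by $\left(C_B Z_n^{2(s-s')}\right)^2 = C_B^2 Z_n^{4(s-s')}$ using Theorem \ref{thm:bias_bound} (squaring inequality \eqref{eq:overall_bias} is valid since both sides are nonnegative), and bound the variance term by $2C_1 Z_n^{4s+D}/n^2 + C_2/n$ using Theorem \ref{thm:variance_bound}. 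Adding these gives inequality \eqref{ineq:MSE_bound_general_Z_n} directly; no further work is needed for the first display of the corollary. Note that Theorem \ref{thm:bias_bound} requires $s' > s$, which is exactly the hypothesis of the corollary, so the invocation is legitimate.

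For the second display, I would treat the right-hand side of \eqref{ineq:MSE_bound_general_Z_n} as a function of the free parameter $Z_n$ and minimize its order. The term $C_2/n$ is independent of $Z_n$, so the relevant trade-off is between the decreasing term $C_B^2 Z_n^{4(s-s')}$ (recall $s - s' < 0$) and the increasing term $2C_1 Z_n^{4s+D}/n^2$. Balancing the two, i.e. setting $Z_n^{4(s-s')} \asymp Z_n^{4s+D}/n^2$, gives $Z_n^{4s'+D} \asymp n^2$, i.e. $Z_n \asymp n^{2/(4s'+D)}$, as claimed. Substituting this choice back in, the balanced term becomes $n^{2\cdot\frac{4(s-s')}{4s'+D}} = n^{\frac{8(s-s')}{4s'+D}}$, so the first two terms are each of order $n^{\frac{8(s-s')}{4s'+D}}$, and combining with the $n^{-1}$ term yields the order $n^{\max\{8(s-s')/(4s'+D),\,-1\}}$ in \eqref{ineq:asymptotic_MSE}. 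One should also check the matching lower bound on the MSE to justify the $\asymp$ (rather than merely $\lesssim$): this follows because the squared bias is itself $\asymp Z_n^{4(s-s')}$ unless $p$ or $q$ is a trigonometric polynomial (the approximation error in \eqref{ineq:trig_approximation_error} is generically tight), and because the variance is at least of order $Z_n^{4s+D}/n^2$ from the leading term; alternatively, one simply cites the minimax lower bounds of \citet{krishnamurthy2014renyiAndFriends} and \citet{birge95functionalLowerBounds} mentioned in the introduction.

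I do not anticipate a genuine obstacle here, since all the heavy lifting is in Theorems \ref{thm:bias_bound} and \ref{thm:variance_bound}; the only mild subtlety is the direction of the optimization inequality. The bound \eqref{ineq:MSE_bound_general_Z_n} holds for \emph{every} integer $Z_n$, so to get the upper bound in \eqref{ineq:asymptotic_MSE} it suffices to exhibit one admissible sequence $Z_n \asymp n^{2/(4s'+D)}$ (rounding to the nearest integer, which affects only constants); there is no need to prove this choice is optimal among all estimators, only that it optimizes our particular bound. The potential pitfall is conflating ``$\asymp$'' with ``the estimator achieves this rate and no better''; I would be careful to state that the upper bound is what \eqref{ineq:MSE_bound_general_Z_n} delivers and that the two-sided rate in \eqref{ineq:asymptotic_MSE} additionally invokes the cited lower bounds (which hold for integer $s$ and for $s=0$, matching the regimes where $\asymp$ is rigorously justified).
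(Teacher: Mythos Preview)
Your proposal is correct and matches the paper's approach exactly: the paper states that the corollary follows by ``the usual decomposition into squared bias and variance'' from Theorems~\ref{thm:bias_bound} and~\ref{thm:variance_bound}, and your write-up simply fills in those routine details and the balancing of $Z_n$. Your additional caution about justifying the two-sided $\asymp$ via external minimax lower bounds goes beyond what the paper itself argues (it treats $\asymp$ loosely as the rate of the optimized upper bound), but this does not detract from the correctness of your argument.
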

Corollary \ref{corr:MSE_bound} recovers the phenomenon discovered by
\citet{bickel88squaredDensityDerivatives}: when $s' \geq 2s + \frac{D}{4}$,
the minimax optimal MSE decays at the ``semi-parametric'' $n\inv$ rate,
whereas, when $s' \in \left( s, 2s + \frac{D}{4} \right)$, the MSE decays at a
slower rate. Also, the estimator is $L^2$-consistent if $Z_n \to \infty$ and
$Z_n n^{-\frac{2}{4s + D}} \to 0$ as $n \to \infty$. This is useful in
practice, since $s$ is known but $s'$ is not.

\section{Asymptotic distributions}
\label{sec:asymptotic_results}
In this section, we derive the asymptotic distributions of our estimator in two
cases: (1) the inner product estimator and (2) the distance estimator in the
case $p = q$. These results provide confidence intervals and two-sample tests
without computationally intensive resampling. While (1) is more general in that
it can be used with (\ref{eqs:sobolev_quantities}) to bound the asymptotic
distributions of the norm and distance estimators, (2) provides a more precise
result leading to a more computationally and statistically efficient two-sample
test. Proofs are given in the supplementary material.

Theorem \ref{thm:asymp_normal} shows that our estimator has a normal asymptotic
distribution, assuming $Z_n \to \infty$ slowly enough as $n \to \infty$, and
also gives a consistent estimator for its asymptotic variance. From this, one
can easily estimate asymptotic confidence intervals for inner products, and
hence also for norms.

\begin{theorem}
{\bf (Asymptotic normality)}
Suppose that, for some $s' > 2s + \frac{D}{4}$, $p, q \in H^{s'}$, and
suppose $Z_n n^{\frac{1}{4(s - s')}} \to \infty$ and
$Z_n n^{-\frac{1}{4s + D}} \to 0$ as $n \to \infty$. Then, $\hat S_n$ is
asymptotically normal with mean $\langle p, q \rangle_{H^s}$. In particular, for
$j \in \{1, \dots, n\}$ and $z \in \Z^D$ with $\|z\|_\infty \leq Z_n$, define
$W_{j,z} := z^s e^{izX_j}$ and $V_{j,z} := z^s e^{izY_j}$, so that $W_j$ and
$V_j$ are column vectors in $\R^{(2Z_n)^D}$. Let
$\overline W := \frac{1}{n} \sum_{j = 1}^n W_j,
  \overline V := \frac{1}{n} \sum_{j = 1}^n V_j
  \in \R^{(2Z_n)^D},$
\[\Sigma_W := \frac{1}{n} \sum_{j = 1}^n (W_j - \overline W) (W_j - \overline W)^T,
  \quad \mbox{ and } \quad
  \Sigma_V := \frac{1}{n} \sum_{j = 1}^n (V_j - \overline V) (V_j - \overline V)^T
  \in \R^{(2Z_n)^D \times (2Z_n)^D}\]
denote the empirical means and covariances of $W$ and $V$, respectively. Then,
for
\[\hat \sigma_n^2
  := \begin{bmatrix}
      \overline V \\
      \overline W
    \end{bmatrix}^T
    \begin{bmatrix}
      \Sigma_W & 0 \\
      0 & \Sigma_V
    \end{bmatrix}
    \begin{bmatrix}
      \overline V \\
      \overline W
    \end{bmatrix},
  \quad \mbox{ we have } \quad
\sqrt{n} \left( \frac{\hat S_n - \langle p, q \rangle_{H^s}}{\hat\sigma_n} \right)
  \stackrel{D}{\to} \mathcal{N}(0, 1),\]
where $\stackrel{D}{\to}$ denotes convergence in distribution.
\label{thm:asymp_normal}
\end{theorem}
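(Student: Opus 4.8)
The plan is to decompose $\sqrt n\big(\hat S_n - \langle p, q\rangle_{H^s}\big)$ into a deterministic bias term, a linear empirical-process term obeying a central limit theorem, and a bilinear remainder that is asymptotically negligible, and then to combine these via Slutsky's theorem together with a consistency statement for $\hat\sigma_n^2$. Write $\delta_p(z) := \hat p(z) - \tilde p(z)$ and $\delta_q(z) := \hat q(z) - \tilde q(z)$, each an average of $n$ IID mean-zero terms, with the $X$-averages independent of the $Y$-averages. Expanding $\hat S_n = \sum_{\|z\|_\infty \le Z_n} z^{2s}\big(\tilde p(z) + \delta_p(z)\big)\overline{\big(\tilde q(z) + \delta_q(z)\big)}$ gives
\[
\hat S_n - \langle p, q\rangle_{H^s}
  = \underbrace{\big(S_n - \langle p, q\rangle_{H^s}\big)}_{=:B_n}
  \;+\; \underbrace{\frac1n\sum_{j=1}^n \xi_j}_{=:L_n}
  \;+\; \underbrace{\sum_{\|z\|_\infty \le Z_n} z^{2s}\,\delta_p(z)\,\overline{\delta_q(z)}}_{=:R_n},
\]
where $\xi_j := \sum_{\|z\|_\infty \le Z_n} z^{2s}\Big(\tilde p(z)\big(\overline{\psi_z(Y_j)} - \overline{\tilde q(z)}\big) + \overline{\tilde q(z)}\big(\psi_z(X_j) - \tilde p(z)\big)\Big)$ are IID and mean zero (their law depends on $n$ through $Z_n$). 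For $B_n$, Theorem \ref{thm:bias_bound} together with \eqref{eq:orthogonal_decomposition} gives $|B_n| = |\langle p - p_n, q - q_n\rangle_{H^s}| \le C_B Z_n^{2(s - s')}$, and the hypothesis $Z_n n^{\frac{1}{4(s - s')}}\to\infty$ (equivalently $Z_n \gg n^{\frac{1}{4(s'-s)}}$) forces $\sqrt n\,|B_n|\to 0$. For $R_n$, independence of the two samples gives $\E R_n = 0$, and $\Var R_n \le 2C_1 Z_n^{4s+D}/n^2$ — this is exactly the bilinear-remainder term bounded within the proof of Theorem \ref{thm:variance_bound} — so the hypothesis $Z_n n^{-\frac{1}{4s+D}}\to 0$ yields $\Var(\sqrt n\,R_n) = O(Z_n^{4s+D}/n) \to 0$, hence $\sqrt n\,R_n \to 0$ in probability.

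The crux is the CLT for $L_n$, which I would obtain by applying Lyapunov's theorem to the triangular array $\{\xi_j\}_{j=1}^n$: with $\sigma_n^2 := \Var\xi_1$, it suffices to check $\E|\xi_1|^{2+\eta} = o\big(n^{\eta/2}\sigma_n^{2+\eta}\big)$ for some $\eta > 0$, whence $\sqrt n\,L_n/\sigma_n \stackrel{D}{\to}\mathcal N(0,1)$. The main obstacle is controlling these moments uniformly as $Z_n\to\infty$. Let $g_p$ be the function with Fourier coefficients $\widetilde{g_p}(z) = z^{2s}\tilde p(z)$ (and similarly $g_q$), with $g_{p,n}$, $g_{q,n}$ their truncations at $Z_n$. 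Since $X_j$ and $Y_j$ are independent, $\xi_1$ splits as a $q$-mean-zero functional of $g_{p,n}$ evaluated at $Y_1$ plus a $p$-mean-zero functional of $g_{q,n}$ at $X_1$, so $\sigma_n^2 = \Var_{Y\sim q}[g_{p,n}(Y)] + \Var_{X\sim p}[g_{q,n}(X)] \to \Var_{Y\sim q}[g_p(Y)] + \Var_{X\sim p}[g_q(X)] =: \sigma^2$ by dominated convergence, provided these limiting variances are finite (and $\sigma^2 > 0$, a mild non-degeneracy that fails only if $g_p$, $g_q$ are a.e.\ constant off the relevant supports — the degenerate case treated separately in Section \ref{sec:asymptotic_results}). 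Finiteness is precisely where $s' > 2s + \frac D4$ enters: it gives the Sobolev embedding $H^{s'}\hookrightarrow W^{2s,4}$ (since $s' - 2s > D(\tfrac12 - \tfrac14)$), so $g_p, g_q \in L^4$, and combining with $p, q \in H^{s'}\hookrightarrow L^r$ for a suitable $r$ via H\"older's inequality bounds $\E_{Y\sim q}[|g_p(Y)|^{2+\eta}]$ and $\E_{X\sim p}[|g_q(X)|^{2+\eta}]$ for small enough $\eta$; these are exactly the $W^{2s,4}$-type quantities in the constant $C_2$ of Theorem \ref{thm:variance_bound}. With these bounds $\E|\xi_1|^{2+\eta} = O(1)$ while $\sigma_n^2\to\sigma^2 > 0$, so the Lyapunov ratio is $O(n^{-\eta/2})\to 0$.

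It remains to show $\hat\sigma_n^2 \stackrel{\pr}{\to}\sigma^2$. Reading off the block-diagonal quadratic form, $\hat\sigma_n^2$ is (up to conjugation) the empirical variance of the projections of the vectors $W_j$ onto $\overline V$ plus that of the $V_j$ onto $\overline W$; the projection of $W_j$ onto $\overline V$ is, up to a mean-zero error, $\sum_{\|z\|_\infty\le Z_n} z^{2s}\tilde q(z)\psi_z(X_j)$. Since $\overline V \stackrel{\pr}{\to}(z^s\tilde q(z))_z$ and $\overline W \stackrel{\pr}{\to}(z^s\tilde p(z))_z$, a law-of-large-numbers argument for this growing-dimension triangular array — again using the $L^4$ control above for uniform integrability, and $Z_n n^{-\frac{1}{4s+D}}\to 0$ to absorb the extra fluctuation from estimating $\overline V$ and $\overline W$ — gives $\overline V^T\Sigma_W\overline V \stackrel{\pr}{\to}\Var_{X\sim p}[g_q(X)]$ and symmetrically for the other block, so $\hat\sigma_n^2 \stackrel{\pr}{\to}\sigma^2$. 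Finally, $\sqrt n\big(\hat S_n - \langle p, q\rangle_{H^s}\big) = \sqrt n\,L_n + o_p(1)$ with $\sqrt n\,L_n/\sigma_n \stackrel{D}{\to}\mathcal N(0,1)$, $\sigma_n\to\sigma$, and $\hat\sigma_n \stackrel{\pr}{\to}\sigma > 0$, so Slutsky's theorem yields $\sqrt n\big(\hat S_n - \langle p, q\rangle_{H^s}\big)/\hat\sigma_n \stackrel{D}{\to}\mathcal N(0,1)$. The bulk of the work, as flagged, is making the moment estimates for the growing-dimension CLT and LLN uniform in $n$ via the Sobolev embedding afforded by $s' > 2s + \frac D4$.
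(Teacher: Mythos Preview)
Your proof is correct and is in fact more careful than the paper's own argument, though the underlying idea is the same. The paper's proof is a brief sketch: it applies the multivariate central limit theorem to the coefficient vectors $(\hat p(z))_{\|z\|_\infty\le Z_n}$ and $(\hat q(z))_{\|z\|_\infty\le Z_n}$, and then invokes the delta method for the bilinear form $h(x,y)=\sum_z z^{2s}x_z y_{-z}$, identifying $\sigma_n^2$ as the delta-method variance $(\nabla h)^T\mathrm{diag}(\Sigma_p,\Sigma_q)(\nabla h)$. It does not address the fact that the ambient dimension $(2Z_n)^D$ grows with $n$, so neither the CLT nor the delta method applies in its textbook form. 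Your approach is essentially the delta-method expansion carried out by hand: $L_n$ is the linearization of $h$ at the true coefficient vectors and $R_n$ is the exact second-order remainder. Because each $\xi_j$ is a \emph{scalar}, you can invoke Lyapunov's CLT for triangular arrays directly, and your use of the embedding $H^{s'}\hookrightarrow W^{2s,4}$ (valid precisely when $s'>2s+D/4$) to get uniform-in-$n$ control of $\E|\xi_1|^{2+\eta}$ is the missing ingredient that makes the growing-$Z_n$ limit rigorous. Your treatment of $R_n$ via the bilinear portion of the variance computation in Theorem~\ref{thm:variance_bound} is also exactly right and explains how the hypothesis $Z_n n^{-1/(4s+D)}\to 0$ enters, which the paper's proof never says. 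Two minor points: the paper does not actually treat the degenerate case $\sigma^2=0$ separately (contrary to your parenthetical), so that non-degeneracy is an implicit standing assumption in both arguments; and your consistency argument for $\hat\sigma_n^2$ is at the same level of sketchiness as the paper's --- it is correct but would benefit from the same uniform $L^4$ bound being invoked once more.
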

Since distances can be written as a sum of three inner products (Eq.
(\ref{eqs:sobolev_quantities})), Theorem \ref{thm:asymp_normal} might suggest
an asymptotic normal distribution for Sobolev distances. However, extending
asymptotic normality from inner products to their sum requires that the three
estimates be independent, and hence that we split data between the three
estimates. This is inefficient in practice and somewhat unnatural, as we know,
for example, that distances should be non-negative. For the particular case
$p = q$ (as in the null hypothesis of two-sample testing), the following
theorem
\footnote{This result is closely related to Proposition 4 of
\citet{chwialkowski15fastTwoSample}. However, in their situation, $s = 0$ and
the set of test frequencies is fixed as $n \to \infty$, whereas our set is
increasing.}
provides a more precise asymptotic ($\chi^2$) distribution of our Sobolev
distance estimator, after an extra decorrelation step. This gives, for example,
a more powerful two-sample test statistic (see Section
\ref{subsec:two_sample_testing} for details).
\begin{theorem}
{\bf (Asymptotic null distribution)}
Suppose that, for some $s' > 2s + \frac{D}{4}$, $p, q \in H^{s'}$, and
suppose $Z_n n^{\frac{1}{4(s - s')}} \to \infty$ and
$Z_n n^{-\frac{1}{4s + D}} \to 0$ as $n \to \infty$.
For $j \in \{1,\dots,n\}$ and $z \in \Z^D$ with $\|z\|_{\infty} \leq Z_n$,
define $W_{j, z} := z^s \left( e^{-izX_j} - e^{-izY_j} \right)$, so that $W_j$
is a column vector in $\R^{(2Z_n)^D}$. Let
\[\overline W := \frac{1}{n} \sum_{j = 1}^n W_j \in \R^{(2Z_n)^D}
  \quad \mbox{ and } \quad
  \Sigma
  := \frac{1}{n} \sum_{j = 1}^n \left( W_j - \overline W \right)
                                \left( W_j - \overline W \right)^T
  \in \R^{(2Z_n)^D \times (2Z_n)^D}\]
denote the empirical mean and covariance of $W$, and define
$T := n \overline W^T \Sigma\inv \overline W$. Then, if $p = q$, then
\[Q_{\chi^2((2Z_n)^D)}(T) \stackrel{D}{\to} \operatorname{Uniform}([0, 1])
  \quad \mbox{ as } \quad
  n \to \infty,\]
where $Q_{\chi^2(d)} : [0, \infty) \to [0, 1]$ denotes the quantile function
(inverse CDF) of the $\chi^2$ distribution $\chi^2(d)$ with $d$ degrees of
freedom.
\label{thm:asymptotic_test_distribution}
\end{theorem}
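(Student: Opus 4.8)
The plan is to show that, after centering by $d_n := (2Z_n)^D$ and scaling by $\sqrt{2d_n}$, the statistic $T$ is asymptotically standard normal --- exactly as a genuine $\chi^2(d_n)$ variate is --- and then to convert this into the claimed uniform limit via the probability integral transform. All of the following is under the null $p = q$.

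First I would record the structure. Under $p = q$, $\E[e^{-izX_j}] = \tilde p(z) = \tilde q(z) = \E[e^{-izY_j}]$, so the i.i.d.\ vectors $W_1,\dots,W_n$ are \emph{exactly} centered: $\E[W_{j,z}] = z^s(\tilde p(z) - \tilde q(z)) = 0$ for all $\|z\|_\infty \le Z_n$. (In particular $\overline W$ has no bias here; the hypothesis $Z_n n^{1/(4(s-s'))} \to \infty$, which elsewhere forces bias to be negligible, enters this proof only through moment bookkeeping.) Let $\Sigma_\infty := \E[W_j W_j^T] \in \R^{d_n\times d_n}$; it is positive definite because the coordinate functions $(x,y)\mapsto z^s(e^{-izx}-e^{-izy})$, $\|z\|_\infty\le Z_n$, are linearly independent, so no nontrivial linear combination of the $W_{j,z}$ is almost surely zero. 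Setting $U_j := \Sigma_\infty^{-1/2} W_j$ --- i.i.d., mean $0$, covariance $I_{d_n}$ --- one rewrites $T = n\,\overline U^T \tilde\Sigma\inv \overline U$, which is the Hotelling $T^2$ statistic (with known null mean $0$) for the $U_j$, where $\tilde\Sigma$ is their sample covariance. A key preliminary, using the (Toeplitz-plus-low-rank) Fourier structure of $\Sigma_\infty$ together with $p = q \in H^{s'}$, is that the relevant moments of $U_j$ --- e.g.\ $\E\|U_j\|_2^4$ and the fourth moments of its coordinates --- scale like those of an order-$d_n$ vector, despite the large $z^s$ weights in $W_j$, because those weights are rescaled away by $\Sigma_\infty^{-1/2}$.

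The heart of the matter is the high-dimensional CLT $\tfrac{T - d_n}{\sqrt{2d_n}} \stackrel{D}{\to} \mathcal N(0,1)$, which I would prove in two steps. \emph{(a)} Replace $\tilde\Sigma$ by $I_{d_n}$: since the $U_j$ have controlled moments, matrix concentration (a matrix Bernstein inequality) gives $\|\tilde\Sigma - I_{d_n}\|_{\mathrm{op}} \to 0$ in probability under the growth condition $Z_n n^{-1/(4s+D)} \to 0$ (which forces $d_n = o(n)$), and a short estimate shows the induced change in $T$ is $o_P(\sqrt{d_n})$. \emph{(b)} CLT for the naive form: write $n\|\overline U\|_2^2 = \tfrac1n\sum_j \|U_j\|_2^2 + \tfrac1n\sum_{j\ne k} U_j^T U_k$. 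The first term concentrates around $\E\|U_j\|_2^2 = d_n$ with fluctuations $O_P(\sqrt{\E\|U_j\|_2^4/n}) = o_P(\sqrt{d_n})$; the second is a degenerate $U$-statistic of mean $0$ and variance $\tfrac{2(n-1)}{n}d_n$, to which the martingale CLT for degenerate $U$-statistics applies, yielding $\tfrac1{\sqrt{2d_n}}\cdot\tfrac1n\sum_{j\ne k}U_j^T U_k \stackrel{D}{\to}\mathcal N(0,1)$, its Lyapunov-type ratio condition again following from the moment scaling above. Combining (a)--(b) gives the CLT for $T$. Since also $\tfrac{\chi^2(d_n)-d_n}{\sqrt{2d_n}}\stackrel{D}{\to}\mathcal N(0,1)$ (elementary), the $t$-quantile $q_{d_n}(t)$ of $\chi^2(d_n)$ satisfies $\tfrac{q_{d_n}(t)-d_n}{\sqrt{2d_n}}\to\Phi\inv(t)$ for each fixed $t\in(0,1)$, where $\Phi$ is the standard normal CDF; hence, with $Q_{\chi^2(d_n)}$ the $\chi^2(d_n)$ CDF,
\[
\pr\!\left(Q_{\chi^2(d_n)}(T)\le t\right)
  = \pr\!\left(T \le q_{d_n}(t)\right)
  = \pr\!\left(\tfrac{T-d_n}{\sqrt{2d_n}} \le \Phi\inv(t) + o(1)\right)
  \longrightarrow \Phi(\Phi\inv(t)) = t
\]
by the CLT for $T$ and Slutsky. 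As $t\in(0,1)$ was arbitrary and $t\mapsto t$ is the continuous CDF of $\mathrm{Uniform}([0,1])$, this gives $Q_{\chi^2(d_n)}(T)\stackrel{D}{\to}\mathrm{Uniform}([0,1])$.

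The main obstacle is Step (b) together with the covariance replacement of Step (a): one must run a quadratic-form CLT in a regime where the dimension $d_n = (2Z_n)^D$ grows with $n$, so that the classical fixed-dimension Hotelling-$T^2$ asymptotics do not apply, and one must show that the Lyapunov/variance-ratio conditions and the operator-norm concentration of $\tilde\Sigma$ follow \emph{cleanly} from $p = q \in H^{s'}$ and the single growth condition $Z_n n^{-1/(4s+D)}\to 0$ --- rather than from an ad hoc stronger moment or dimension assumption. Concretely, this requires controlling how the magnitudes $|z^s|$ over $\|z\|_\infty\le Z_n$ are rescaled inside $\Sigma_\infty^{-1/2}$, i.e.\ bounding the conditioning of $\Sigma_\infty$ after diagonal normalization, which is where the smoothness of $p$ and the Fourier-analytic structure of the problem do the real work.
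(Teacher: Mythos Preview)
Your approach is sound but takes a genuinely different route from the paper's. The paper's proof of this theorem is essentially a one-liner: it observes that under $p=q$ the vector $\overline W$ is a mean-zero empirical average, invokes the multivariate CLT from the preceding theorem, and then cites the classical Hotelling-$T^2$ result (Theorem~5.2.3 of Anderson, \emph{An Introduction to Multivariate Statistical Analysis}) to conclude that $T$ is asymptotically $\chi^2$. In other words, the paper treats the dimension $d_n=(2Z_n)^D$ as if it were fixed and applies the textbook result directly.

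You, by contrast, take the growing-dimension issue seriously. Your plan---normalize via $\Sigma_\infty^{-1/2}$, replace the empirical covariance by the identity using matrix concentration, decompose $n\|\overline U\|_2^2$ into a diagonal LLN piece and a degenerate $U$-statistic piece, and run a martingale CLT on the latter---is the standard modern route to a quadratic-form CLT when $d_n\to\infty$ with $d_n=o(n)$. You then match the resulting normal limit to the $(\chi^2(d_n)-d_n)/\sqrt{2d_n}$ normal approximation via quantile convergence. This is strictly more work than the paper does, but it is also strictly more honest: the classical Hotelling result the paper cites is a fixed-$d$ statement, and the paper does not explain why it continues to hold when $d_n$ grows. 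Your plan supplies exactly that missing justification, and correctly identifies where the real difficulty lies---controlling the conditioning of $\Sigma_\infty$ after diagonal rescaling so that the Lyapunov and operator-norm conditions follow from the stated growth rate on $Z_n$ alone.
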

Let $\hat M$ denote our estimator for $\|p - q\|_{H^s}$ (i.e., plugging
$\hat S_n$ into \eqref{eqs:sobolev_quantities}).
While Theorem \ref{thm:asymptotic_test_distribution} immediately provides a
\emph{valid} two-sample test of desired level, it is not immediately clear
how this relates to $\hat M$, nor is there any suggestion of why the test
statistic ought to be a good (i.e., consistent) one. Some intuition is as
follows. Notice that $\hat M = \overline W^T \overline W$. Since, by the central
limit theorem, $\overline W$ has a normal asymptotic distribution, if the
components of $\overline W$ were uncorrelated (and $Z_n$ were fixed), we would
expect $n \hat M$ to have an asymptotic $\chi^2$ distribution with
$(2Z_n)^D$ degrees of freedom. However, because we use the same data to compute
each component of $\hat M$, they are \emph{not} typically uncorrelated, and so
the asymptotic distribution of $\hat M$ is difficult to derive. This motivates
the statistic
$T = \left( \sqrt{\Sigma_W\inv} \overline W \right)^T
            \sqrt{\Sigma_W\inv} \overline W$,
since the components of $\sqrt{\Sigma_W\inv} \overline W$ \emph{are}
(asymptotically) uncorrelated.

\section{Parameter selection and statistical/computational trade-off}
\label{sec:practical}
Here, we give statistical and computational considerations for choosing the
smoothing parameter $Z_n$.

{\bf Statistical perspective:} In practice, of course, we do not typically know
$s'$, so we cannot simply set $Z_n \asymp n^{\frac{2}{4s' + D}}$, as suggested
by the mean squared error bound (\ref{ineq:asymptotic_MSE}). Fortunately (at
least for ease of parameter selection), when $s' \geq 2s + \frac{D}{4}$, the
dominant term of (\ref{ineq:asymptotic_MSE}) is $C_2/n$ for
$Z_n \asymp n^{-\frac{1}{4s + D}}$. Hence if we are willing to assume that the
density has at least $2s + \frac{D}{4}$ orders of smoothness (which may be a
mild assumption in practice), then we achieve \emph{statistical optimality} (in
rate) by setting $Z_n \asymp n^{-\frac{1}{4s + D}}$, which depends only on
known parameters. On the other hand, the estimator can continue to benefit from
additional smoothness \emph{computationally}.

% \subsection{Computational Considerations}
% \label{subsec:computational}
{\bf Computational perspective}
One attractive property of the estimator discussed is its computational
simplicity and efficiency with respect to $n$, in low dimensions. Most
competing nonparametric estimators, such as kernel-based or nearest-neighbor
methods, either take $O(n^2)$ time or rely on complex data structures such as
$k$-d trees or cover trees \citep{ram09linearTime} for $O(2^D n\log n)$ time
performance. Since computing the estimator takes $O(nZ_n^D)$ time and
$O(Z_n^D)$ memory (that is, the cost of estimating each of $(2Z_n)^D$ Fourier
coefficients by an average), a statistically optimal choice of $Z_n$ gives a
runtime of $O \left( n^{\frac{4s' + 2D}{4s' + D}} \right)$.
Since the estimate requires only a vector outer product, exponentiation, and
averaging, the constants involved are small and computations parallelize
trivially over frequencies and data.

Under severe computational constraints, for very large data sets, or if $D$ is
large relative to $s'$, we can reduce $Z_n$ to trade off statistical for
computational efficiency. For example, if we want an estimator
with runtime $O(n^{1 + \theta})$ and space requirement $O(n^\theta)$ for
some $\theta \in \left( 0, \frac{2D}{4s' + D} \right)$, setting
$Z_n \asymp n^{\theta/D}$ still gives a consistent estimator, with mean squared
error of the order
$O \left( n^{\max\{\frac{4\theta(s - s')}{D}, -1\}} \right)$.

Kernel- or nearest-neighbor-based methods, including nearly all of the methods
described in Section \ref{sec:related_work}, tend to require storing previously
observed data, resulting in $O(n)$ space requirements. In contrast, orthogonal
basis estimation requires storing only $O(Z_n^D)$ estimated Fourier
coefficients. The estimated coefficients can be incrementally updated with each
new data point, which may make the estimator or close approximations feasible
in streaming settings.

\section{Experimental results}
\label{sec:empirical}

In this section, we use synthesized data to demonstrate the effectiveness of our methods.
A MATLAB implementation of our estimators, two-sample tests, and experiments is
available at \url{https://github.com/sss1/SobolevEstimation}.
For all experiments, we use $10,100,1000,10000,100000$ samples for estimation.

We first test our estimators on 1D $L_2$ distances.
Figure~\ref{fig:distance_1d_gaussian_different_mean} shows estimated distance between $\mathcal{N}\left(0,1\right)$ and $\mathcal{N}\left(1,1\right)$;
Figure~\ref{fig:distance_1d_gaussian_different_var} shows estimated distance between $\mathcal{N}\left(0,1\right)$ and $\mathcal{N}\left(0,4\right)$;
Figure~\ref{fig:distance_unif_different_range} shows estimated distance between Unif $\left[0,1\right]$ and Unif$\left[0.5,1.5\right]$;
Figure~\ref{fig:distance_unif_tri} shows estimated distance between $\left[0,1\right]$ and a triangular distribution whose density is highest at $x=0.5$.
Error bars indicate asymptotic $95\%$ confidence intervals based on Theorem
\ref{thm:asymp_normal}. These experiments suggest $10^5$ samples is sufficient
to estimate $L_2$ distances with high confidence. Note that we need fewer
samples to estimate Sobolev quantities of Gaussians than, say, of uniform
distributions, consistent with our theory, since (infinitely differentiable)
Gaussians are smoothier than (discontinuous) uniform distributions.

Next, we test our estimators on $L_2$ distances of multivariate distributions.
Figure~\ref{fig:distance_3d_gaussian_different_mean} shows estimated distance between $\mathcal{N}\left(\left[0,0,0\right],\mathbf{I}\right)$ and $\mathcal{N}\left(\left[1,1,1\right],\mathbf{I}\right)$;
Figure~\ref{fig:distance_3d_gaussian_different_var} shows estimated distance between $\mathcal{N}\left(\left[0,0,0\right],\mathbf{I}\right)$ and $\mathcal{N}\left(\left[0,0,0\right],4\mathbf{I}\right)$.
Again, these experiments show that our estimators can also handle multivariate distributions.

Lastly, we test our estimators for $H^s$ norms.
Figure~\ref{fig:norm_gaussian} shows estimated $H^0$ norm of $\mathcal{N}\left(0,1\right)$ and 
Figure~\ref{fig:norm_gaussian_derivative} shows $H^1$ norm of $\mathcal{N}\left(0,1\right)$.
Notice that we need fewer samples to estimate $H_0$ than $H^1$, which verifies our theory.

\begin{figure*}[t]
	\centering
	\begin{subfigure}[t]{0.22\textwidth}
		\centering
		\includegraphics[width=\textwidth]{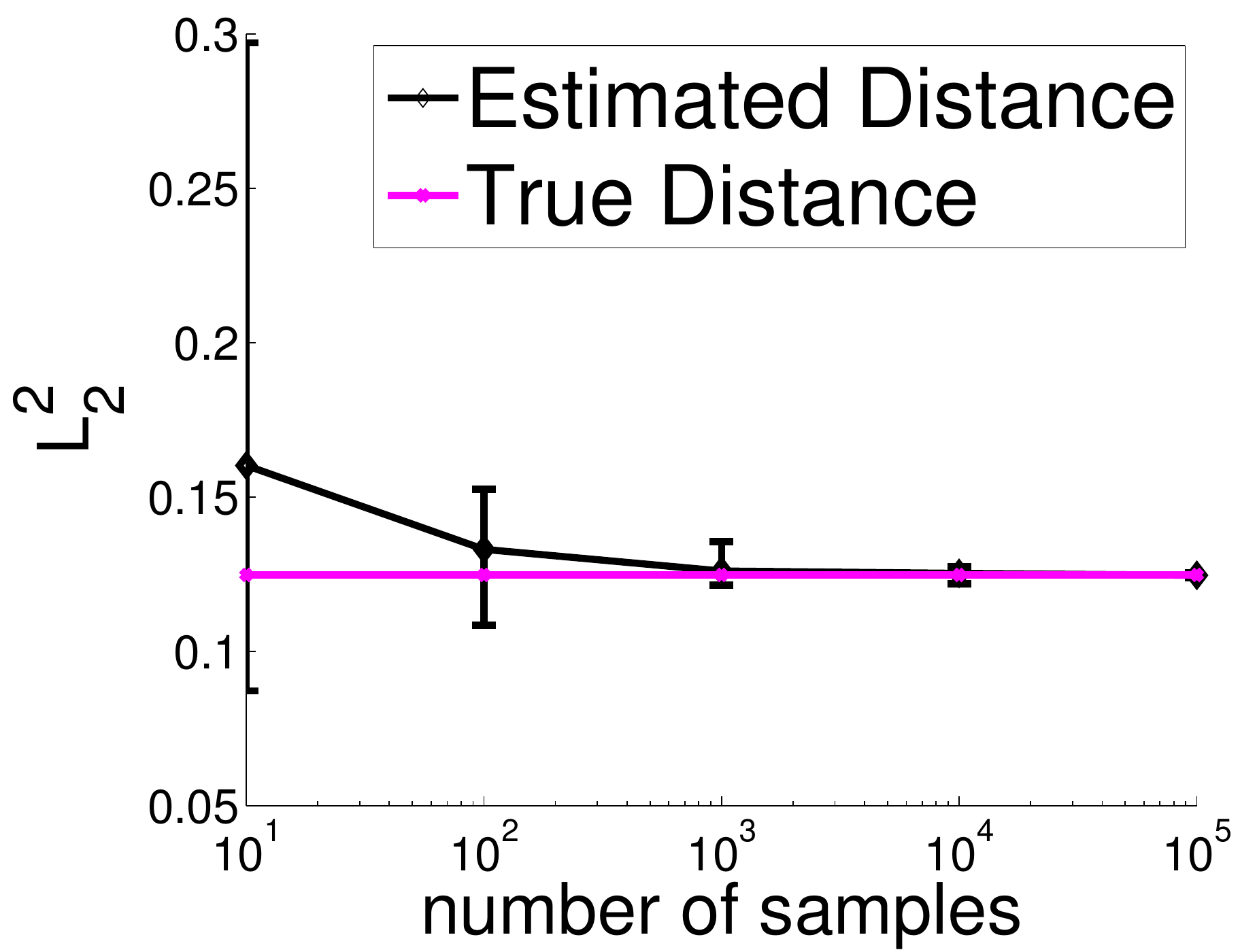}
		\caption{Two 1D Gaussian distributions  with different means.}
		\label{fig:distance_1d_gaussian_different_mean}
	\end{subfigure}
	\quad
	\begin{subfigure}[t]{0.22\textwidth}
		\centering
		\includegraphics[width=\textwidth]{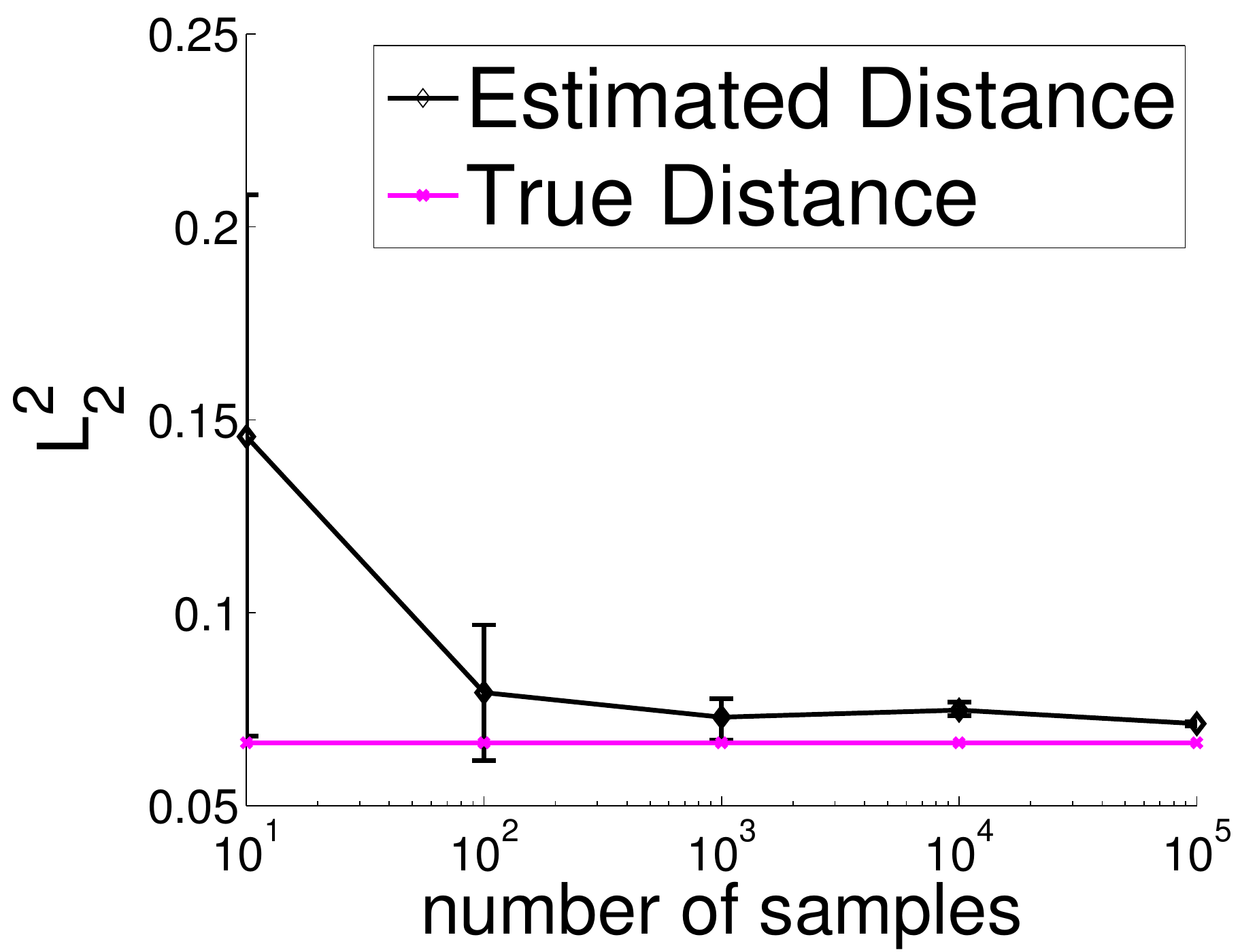}
		\caption{Two 1D Gaussian distributions with different variance.}
		\label{fig:distance_1d_gaussian_different_var}
	\end{subfigure}
	\quad
	\begin{subfigure}[t]{0.22\textwidth}
		\centering
		\includegraphics[width=\textwidth]{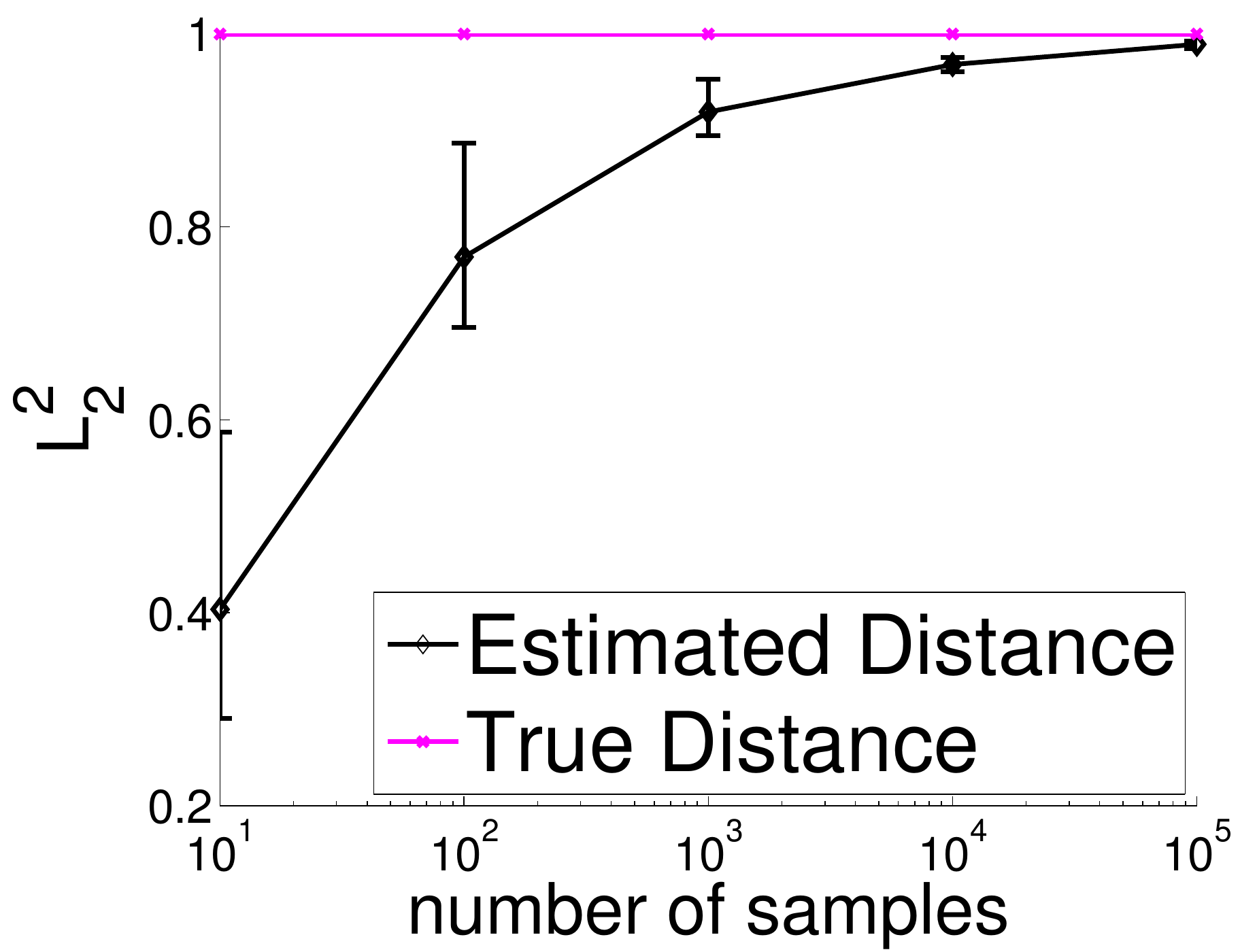}
		\caption{Two uniform distributions with different ranges.}
		\label{fig:distance_unif_different_range}
	\end{subfigure}
	\quad
	\begin{subfigure}[t]{0.22\textwidth}
		\centering
		\includegraphics[width=\textwidth]{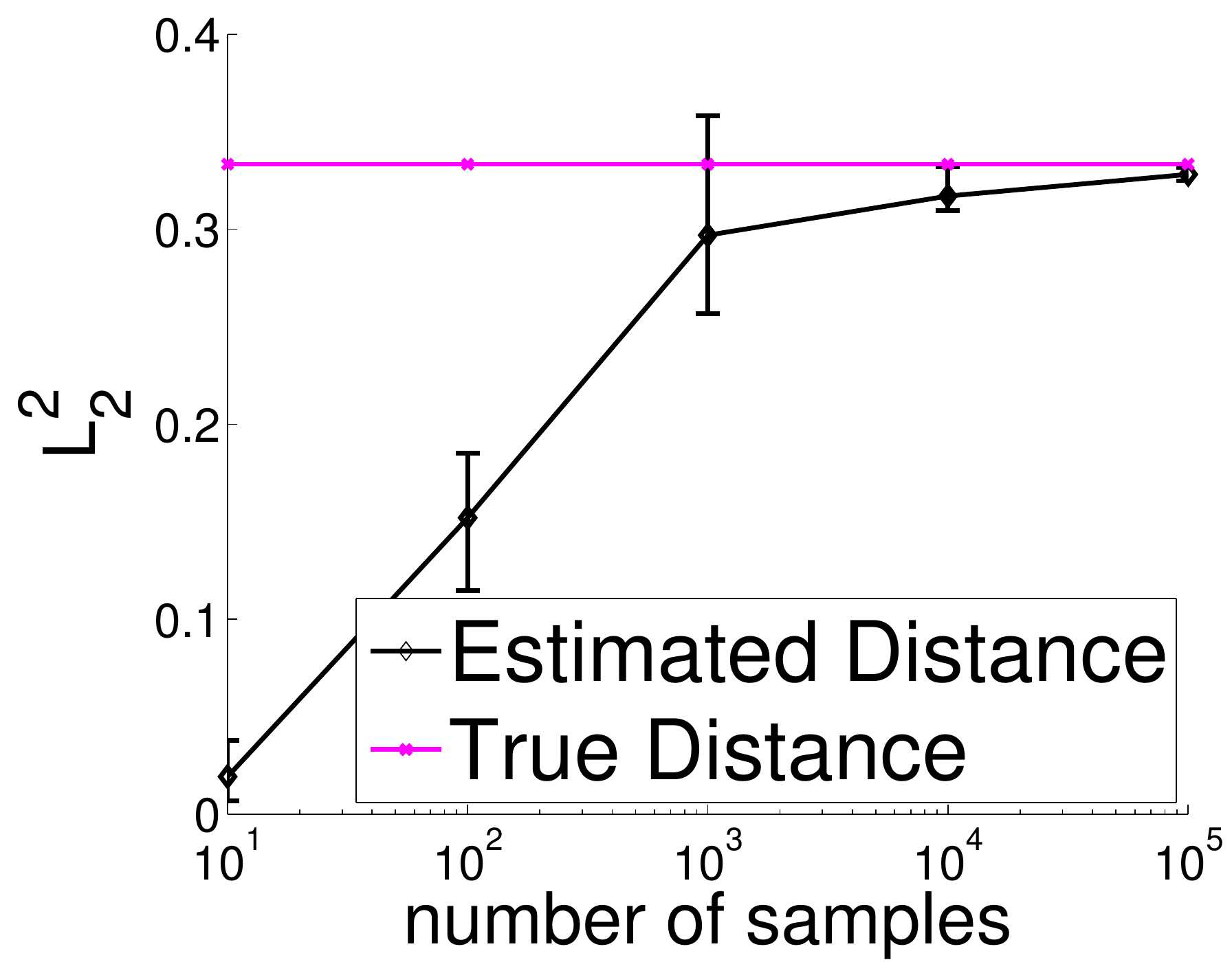}
		\caption{One uniform distribution and one triangular distribution.}
		\label{fig:distance_unif_tri}
	\end{subfigure}
\vspace{-0.3cm}
\end{figure*}

\begin{figure*}[t]
	\centering
	\begin{subfigure}[t]{0.22\textwidth}
		\centering
		\includegraphics[width=\textwidth]{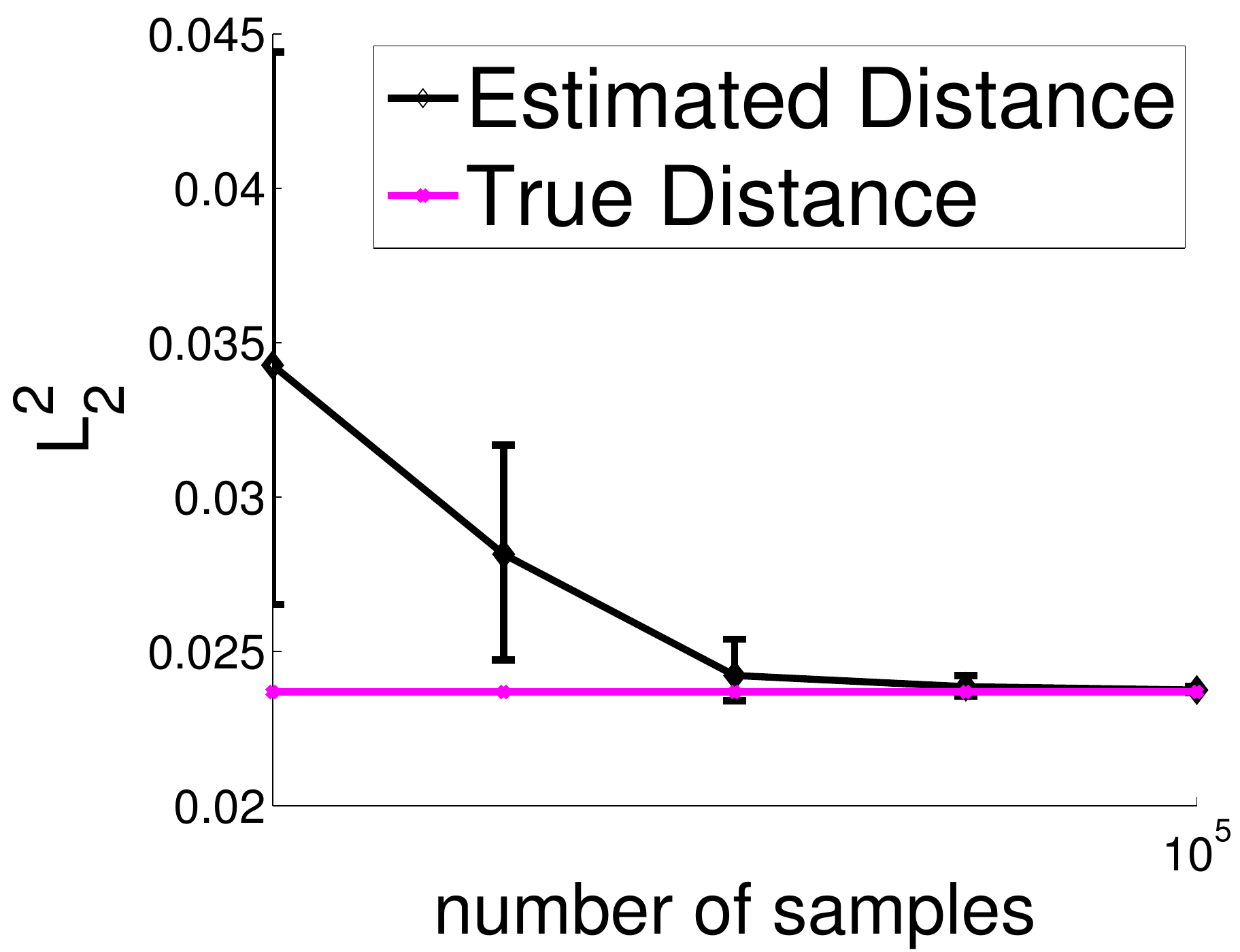}
		\caption{Two 3D Gaussian distributions with different means.}
		\label{fig:distance_3d_gaussian_different_mean}
	\end{subfigure}
	\quad
	\begin{subfigure}[t]{0.22\textwidth}
		\includegraphics[width=\textwidth]{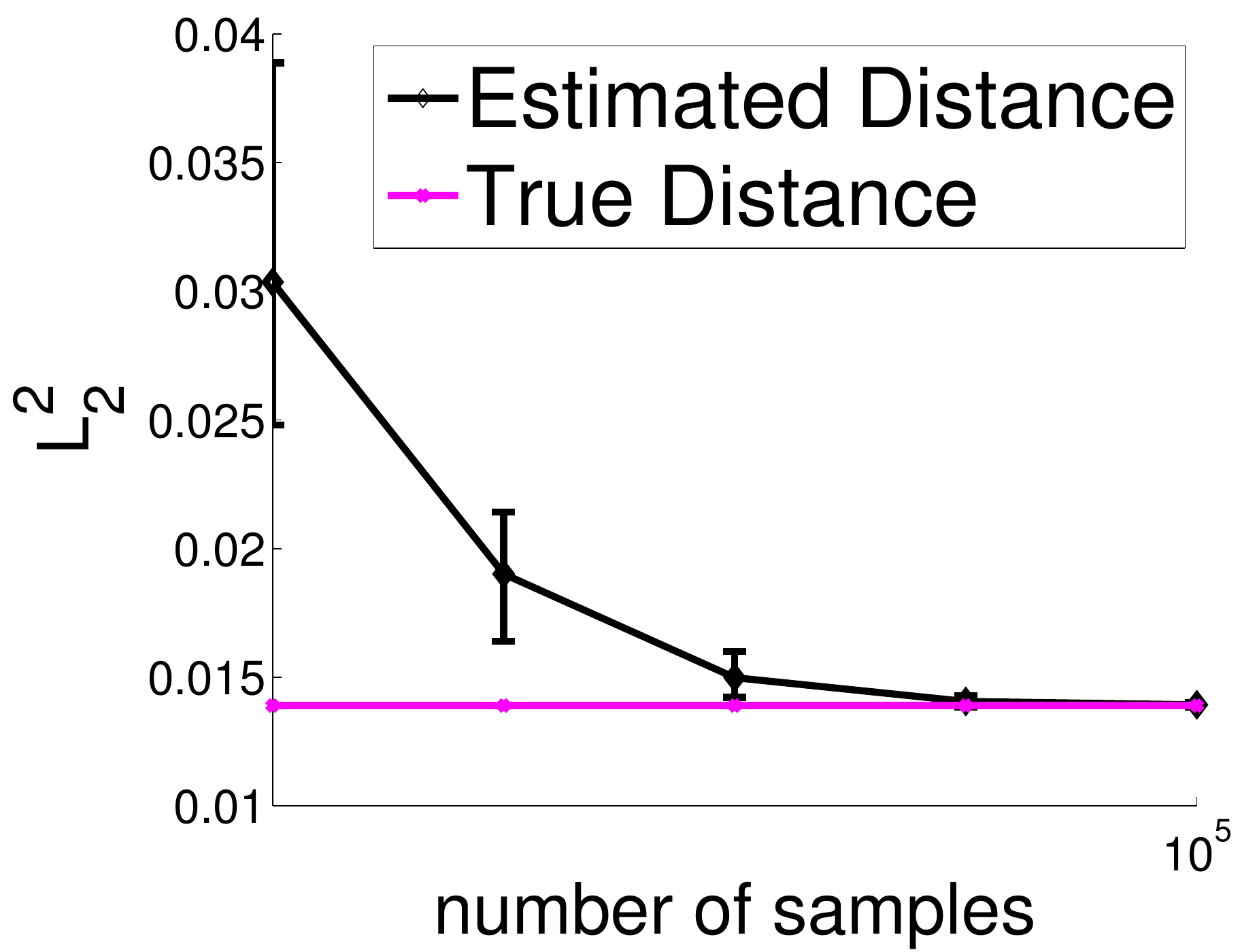}
		\caption{
		Two 3D Gaussian distributions with different variance.
		}
		\label{fig:distance_3d_gaussian_different_var}
	\end{subfigure}
	\quad
		\begin{subfigure}[t]{0.22\textwidth}
			\centering
			\includegraphics[width=\textwidth]{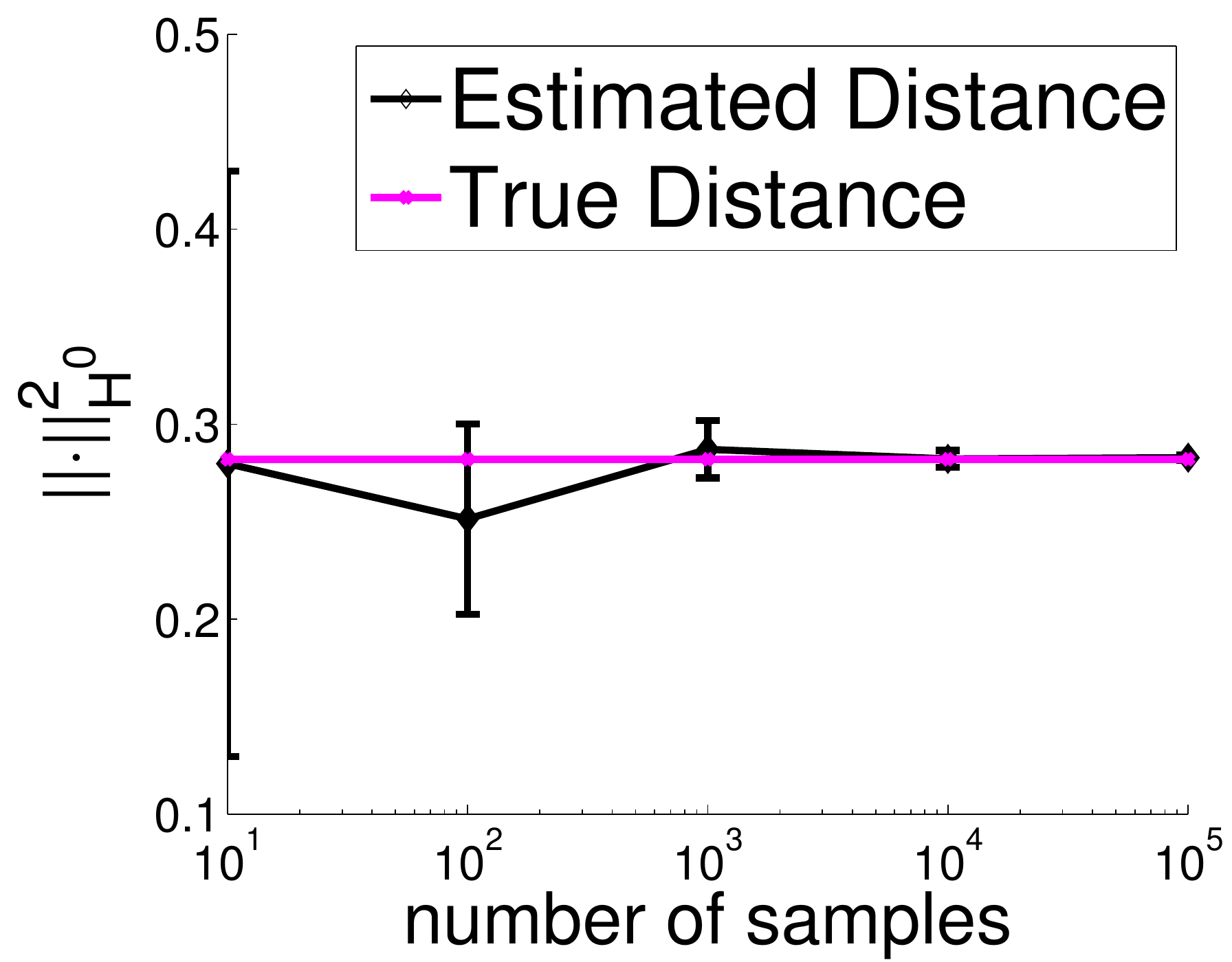}			
					\caption{
					Estimation of $H^0$ norm of $\mathcal{N}\left(0,1\right)$.
					}
					\label{fig:norm_gaussian}
		\end{subfigure}
		\quad
		\begin{subfigure}[t]{0.22\textwidth}
			\centering
			\includegraphics[width=\textwidth]{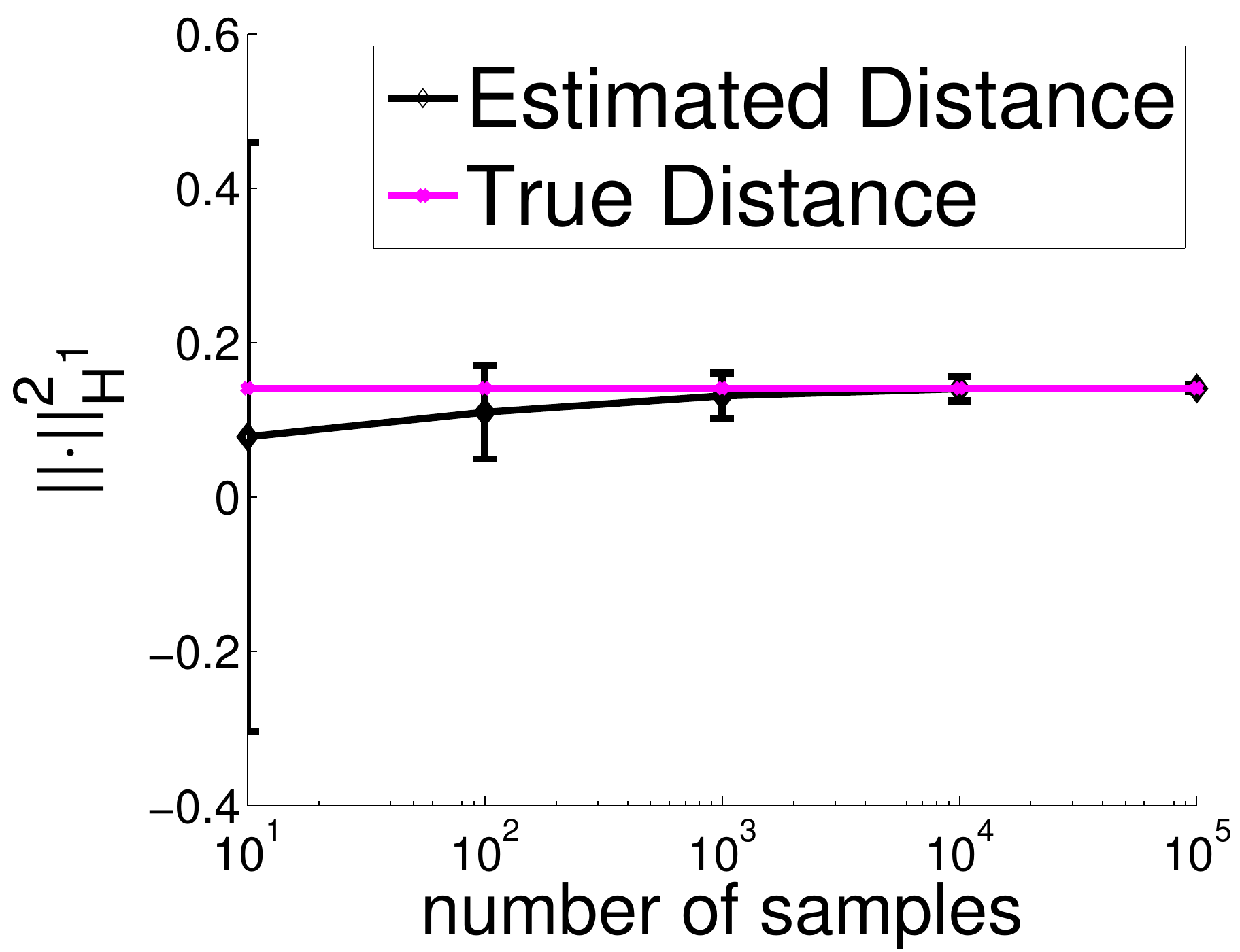}	
			\caption{
			Estimation of $H^1$ norm of $\mathcal{N}\left(0,1\right)$.
			}
			\label{fig:norm_gaussian_derivative}
		\end{subfigure}
\vspace{-0.3cm}
\end{figure*}

\section{Connections to two-sample testing}
\label{subsec:two_sample_testing}

Here, we discuss the use of our estimator in two-sample testing. There is a
large literature on nonparametric two-sample testing, but we discuss only some
recent approaches with theoretical connections to ours.

Let $\hat M$ denote our estimate of the Sobolev distance, consisting of
plugging $\hat S$ into equation (\ref{eqs:sobolev_quantities}). Since
$\|\cdot - \cdot\|_{H^s}$ is a metric on the space of probability density
functions in $H^s$, computing $\hat M$ leads naturally to a two-sample test on
this space. Theorem \ref{thm:asymptotic_test_distribution} suggests an
asymptotic test, which is computationally preferable to a permutation test. In
particular, for a desired Type I error rate $\alpha \in (0, 1)$ our test
rejects the null hypothesis $p = q$ if and only if
$Q_{\chi^2(2Z_n^D)}(T) < \alpha$.

When $s = 0$, this approach is closely related to several two-sample
tests in the literature based on comparing empirical characteristic
functions (CFs). Originally, these tests
\citep{heathcote1972test,epps86twoSampleECF} computed the same statistic $T$
with a \emph{fixed} number of random $\R^D$-valued frequencies instead of
deterministic $\Z^D$-valued frequencies. This test runs in linear time, but is
not generally consistent, since the two CFs need not differ almost everywhere.
Recently, \citet{chwialkowski15fastTwoSample} suggested using \emph{smoothed
CFs}, i.e., the convolution of the CF with a universal smoothing kernel $k$.
This is computationally easy (due to the convolution theorem) and, when
$p \neq q$, $(\tilde p * k)(z) \neq (\tilde q * k)(z)$ for almost all
$z \in \R^D$, reducing the need for carefully choosing test frequencies.
Furthermore, this test is almost-surely consistent under very general
alternatives. However, it is not clear what sort of assumptions would allow
finite sample analysis of the power of their test. Indeed, the convergence as
$n \to \infty$ can be arbitrarily slow, depending on the random test
frequencies used. Our analysis instead uses the assumption $p, q \in H^{s'}$ to
ensure that small, $\Z^D$-valued frequencies contain most of the power of
$\tilde p$.
\footnote{Note that smooth CFs can be used in our test by replacing $\hat p(z)$
with $\frac{1}{n} \sum_{j = 1}^n e^{-iz X_j} k(x)$, where $k$ is the inverse
Fourier transform of a characteristic kernel. However, smoothing seems less
desirable under Sobolev assumptions, as it spreads the power of the CF away
from small $\Z^D$-valued frequencies where our test focuses.}
% \begin{enumerate}
% \item
% Rather than a range of integer frequencies
% $\{z \in \Z^D : \|z\|_\infty \leq Z_n\}$ with $Z_n \to \infty$ as
% $n \to \infty$, \citet{chwialkowski15fastTwoSample} uses a \emph{fixed} number
% of random \emph{real-valued} frequencies.
% \item
% Rather than estimating $\tilde p$ and $\tilde q$,
% \citet{chwialkowski15fastTwoSample} suggests smoothing the Fourier transforms
% via convolution with a universal kernel $\hat k$. This is computationally easy
% (due to the convolution theorem) and, when $p \neq q$,
% $(\tilde p * k)(z) \neq (\tilde q * k)(z)$ for almost all $z \in \R^D$,
% reducing the need for carefully choosing test frequencies.
% \item
% \citet{chwialkowski15fastTwoSample} prove almost sure consistency of their
% test under minimal assumptions, but it is not clear what sort of assumptions
% would allow finite sample analysis of the power of their test. Indeed, the
% convergence as $n \to \infty$ can be arbitrarily slow, depending on the random
% test frequencies used. Our analysis instead uses the assumption
% $p, q \in H^{s'}$ to ensure that small, integer-valued frequencies contain most
% of the power of $\tilde p$.
% \end{enumerate}

These fixed-frequency approaches can be thought of as the extreme point
$\theta = 0$ of the computational/statistical trade-off described in section
\ref{sec:practical}: they are computable in linear time and (with
smoothing) are strongly consistent, but do not satisfy finite-sample bounds
under general conditions.

At the other extreme ($\theta = 1$) are MMD-based tests of
\citet{gretton06MMD_NIPS,gretton12MMD_JMLR}, which utilize the entire spectrum
$\tilde p$. These tests are statistically powerful and have strong guarantees
for densities in an RKHS, but have $O(n^2)$ computational complexity.
\footnote{Fast MMD approximations have been proposed, including the Block MMD,
\citep{zaremba13blockMMD} FastMMD, \citep{zhao15fastMMD} and $\sqrt{n}$
sub-sampled MMD, but these lack the statistical guarantees of MMD.} The
computational/statistical trade-off discussed in Section
\ref{sec:practical} can be thought of as an interpolation (controlled
by $\theta$) of these approaches, with runtime in the case $\theta = 1$
approaching quadratic for large $D$ and small $s'$.

\section{Conclusions and future work}
In this paper, we proposed nonparametric estimators for Sobolev inner products,
norms and distances of probability densities, for which we derived finite-sample
bounds and asymptotic distributions.

A natural follow-up question to our work is whether estimating smoothness of a
density can guide the choice of smoothing parameters in nonparametric
estimation. For some problems, such as estimating functionals of a density,
this may be especially useful, since no error metric is typically available for
cross-validation. Even when cross-validation is an option, as in density
estimation or regression, estimating smoothness may be faster, or may suggest
an appropriate range of parameter values.

\appendix

\section{Proof of Variance Bound}
\label{sec:variance_bound}
\stepcounter{theorem}
\begin{theorem}
{\bf (Variance Bound)}
If $p, q \in H^{s'}$ for some $s' > s$, then
\begin{equation}
\Var \left[ \hat S_n \right]
  \leq 2C_1 \frac{Z_n^{4s + D}}{n^2} + \frac{C_2}{n},
\label{ineq:variance_bound_appendix}
\end{equation}
where $C_1$ and $C_2$ are the constants (in $n$)
\[C_1 := \frac{2^D\Gamma(4s + 1)}{\Gamma(4s + D + 1)} \|p\|_{L^2} \|q\|_{L^2}\]
and
$C_2
  := \left( \|p\|_{H^s} + \|q\|_{H^s} \right) \|p\|_{W^{2s,4}} \|q\|_{W^{2s,4}}
      + \|p\|_{H^s}^4 \|q\|_{H^s}^4$.
\label{thm:variance_bound_appendix}
\end{theorem}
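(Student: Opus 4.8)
The plan is to expand the variance of $\hat S_n$ directly in terms of the Fourier-basis sums, exploiting independence between the $X$-sample and $Y$-sample and the i.i.d.\ structure within each sample. Writing $\hat S_n = \sum_{\|z\|_\infty \leq Z_n} z^{2s}\hat p(z)\overline{\hat q(z)}$ with $\hat p(z) = \frac1n\sum_j \psi_z(X_j)$ and $\hat q(z) = \frac1n\sum_j \psi_z(Y_j)$, I would first compute
\[
\Var\left[\hat S_n\right]
 = \E\left[\left|\hat S_n\right|^2\right] - \left|\E\left[\hat S_n\right]\right|^2
 = \sum_{\|z\|_\infty, \|w\|_\infty \leq Z_n} z^{2s} w^{2s}
   \Cov\!\left(\hat p(z)\overline{\hat q(z)},\ \overline{\hat p(w)}\,\hat q(w)\right).
\]
Because $\{X_j\}$ and $\{Y_j\}$ are independent, $\Cov(\hat p(z)\overline{\hat q(z)}, \overline{\hat p(w)}\hat q(w)) = \E[\hat p(z)\overline{\hat p(w)}]\,\E[\overline{\hat q(z)}\hat q(w)] - \tilde p_n(z)\overline{\tilde p_n(w)}\,\overline{\tilde q_n(z)}\tilde q_n(w)$. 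I would then introduce the centered quantities $A(z,w) := \E[\hat p(z)\overline{\hat p(w)}] - \tilde p_n(z)\overline{\tilde p_n(w)} = \frac1n\big(\widetilde{p}(z-w) - \tilde p(z)\overline{\tilde p(w)}\big)$ (using that $\psi_z\overline{\psi_w} = \psi_{z-w}$ so $\E[\psi_z(X)\overline{\psi_w(X)}] = \tilde p(z-w)$), and similarly $B(z,w)$ for $q$. Multiplying out gives $\Cov = A(z,w)B(z,w) + A(z,w)\overline{\tilde q_n(z)}\tilde q_n(w) + \tilde p_n(z)\overline{\tilde p_n(w)} B(z,w)$, which is the source of the two terms in the bound: the first product yields the $Z_n^{4s+D}/n^2$ term, the two cross terms yield the $1/n$ term.

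For the $n^{-2}$ term, I would bound $\big|\sum_{z,w} z^{2s}w^{2s} A(z,w)B(z,w)\big|$ by separating the "diagonal-like" piece coming from $\frac1n\tilde p(z-w)\cdot\frac1n\tilde q(z-w)$ from the piece coming from the products of first moments. The main contribution is $\frac{1}{n^2}\sum_{z,w} z^{2s}w^{2s}\tilde p(z-w)\overline{\tilde q(z-w)}$; I would substitute $u = z-w$ and use $|\tilde p(u)| \leq \|p\|_{L^2}$, $|\tilde q(u)| \leq \|q\|_{L^2}$ (Bessel/Parseval: $\tilde p(u) = \langle\psi_u,p\rangle$ so $|\tilde p(u)|\le \|p\|_{L^2}\|\psi_u\|_{L^2} = (2\pi)^{D/2}\|p\|_{L^2}$, absorbing constants appropriately), reducing the bound to a counting estimate $\sum_{\|z\|_\infty,\|w\|_\infty\le Z_n} z^{2s}w^{2s}$. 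The factor $\frac{2^D\Gamma(4s+1)}{\Gamma(4s+D+1)}$ in $C_1$ strongly suggests this sum is estimated by comparison to the integral $\int_{[-Z_n,Z_n]^{2D}} |z|^{2s}|w|^{2s}$, or more precisely that the relevant quantity is $\sum_{\|v\|_\infty \le Z_n} v^{4s}$ (after a Cauchy–Schwarz or rearrangement in $z,w$) approximated by $\int_{[-Z_n,Z_n]^D}\prod_j |v_j|^{4s}\,dv = \big(\tfrac{2 Z_n^{4s+1}}{4s+1}\big)^D$, and then a further product/Beta-integral identity gives the Gamma-function constant — I would verify this multivariate counting step carefully since it controls $C_1$ exactly.

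For the $n^{-1}$ term, coming from $\sum_{z,w} z^{2s}w^{2s}\big(A(z,w)\overline{\tilde q_n(z)}\tilde q_n(w) + \tilde p_n(z)\overline{\tilde p_n(w)}B(z,w)\big)$, I would again split $A(z,w) = \frac1n(\tilde p(z-w) - \tilde p(z)\overline{\tilde p(w)})$. The term with $\tilde p(z)\overline{\tilde p(w)}$ factors as $\frac1n\big|\sum_z z^{2s}\tilde p_n(z)\overline{\tilde q_n(z)}\cdot(\text{conj})\big|$-type products, bounded by $\|p\|_{H^s}^2\|q\|_{H^s}^2$ and its square (giving the $\|p\|_{H^s}^4\|q\|_{H^s}^4$ piece — though I'd need to double-check the exact exponents, as the stated $C_2$ has $\|p\|_{H^s}^4\|q\|_{H^s}^4$ which looks like it might actually want to be degree-$4$ overall; I would track this honestly). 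The term with $\tilde p(z-w)$ is the delicate one: $\frac1n\sum_{z,w} z^{2s}w^{2s}\tilde p(z-w)\overline{\tilde q_n(z)}\tilde q_n(w)$ is a bilinear form in $\tilde q_n$ with kernel $z^{2s}w^{2s}\tilde p(z-w)$, and bounding it requires recognizing it as (a truncation of) a convolution — I expect to write it as $\langle$ something involving $p$, $q^2$-type object $\rangle$ and invoke Hölder with the $W^{2s,4}$ (Sobolev $L^4$) norms, which is exactly why $\|p\|_{W^{2s,4}}\|q\|_{W^{2s,4}}$ appears in $C_2$. \textbf{This is the step I expect to be the main obstacle}: controlling the cross term $\frac1n\sum z^{2s}w^{2s}\tilde p(z-w)\overline{\tilde q_n(z)}\tilde q_n(w)$ uniformly in $Z_n$ requires moving from the frequency domain back to the data domain via Parseval, writing the double sum as an integral $\int (\text{fractional derivative of }p)(x)\cdot |(\text{fractional derivative of }q_n)(x)|^2\,dx$ or similar, and then applying Hölder's inequality with exponents $(\infty$ or $2, 4, 4)$ — the bookkeeping with fractional derivatives, the truncation $q_n$ versus $q$, and ensuring no hidden $Z_n$-dependence creeps in is where the real work lies.
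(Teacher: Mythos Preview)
Your approach is correct and reaches the same three sums the paper must bound, but via a \emph{different initial decomposition}: you expand $\Var[\hat S_n]$ directly as a double sum over frequencies and factor the covariance using independence of the two samples, whereas the paper applies the Efron--Stein inequality (replacing one $X_\ell$ by an independent copy and bounding $\E|\hat S_n - \hat S_n^{(\ell)}|^2$). Your route gives an exact expression for the variance in terms of $\tilde p,\tilde q$, while Efron--Stein yields only an upper bound; either way, after expanding $AB + AQ_0 + P_0 B$ (your notation) or the Efron--Stein difference (theirs), both reduce to bounding the same three quantities
$\sum (yz)^{2s}\tilde p(y-z)\tilde q(z-y)$,
$\sum (yz)^{2s}\tilde p(y-z)\tilde q(-y)\tilde q(z)$, and
$\sum (yz)^{2s}\tilde p(y)\tilde p(-z)\tilde q(-y)\tilde q(z)$.

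One place your sketch is vaguer than the paper: for the $Z_n^{4s+D}$ term, the mechanism is \emph{not} a pointwise bound on $|\tilde p(u)|$ (that would lose a factor $Z_n^D$). After substituting $k=y-z$, the inner sum $\sum_z ((k-z)z)^{2s}$ is the auto-convolution $(f*f)(k)$ with $f(z)=z^{2s}1_{\{|z|\le Z_n\}}$, maximized at $k=0$ to give $\sum_{|z|\le Z_n} z^{4s}$ (whence the Gamma constant); then Cauchy--Schwarz on $\sum_k \tilde p(k)\tilde q(-k)$ gives $\|p\|_{L^2}\|q\|_{L^2}$. Your identification of the $W^{2s,4}$ cross term as the main obstacle matches the paper exactly: they write $\sum_z z^{2s}\tilde p(y-z)\tilde q(z)=\widetilde{(p_n^{(s)}q_n^{(s)})}(y)$ via a convolution identity and apply Cauchy--Schwarz twice. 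Finally, your doubt about $\|p\|_{H^s}^4\|q\|_{H^s}^4$ is justified --- the paper's own Cauchy--Schwarz at that step actually produces $\|p\|_{H^s}^2\|q\|_{H^s}^2$.
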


\begin{proof}
We will use the Efron-Stein inequality \citep{efronStein81} to bound the
variance of $\hat S_n$. To do this, suppose we were to draw $n$ additional IID
samples $X_1',\dots,X_n' \sim p$, and define, for all
$\ell, j \in \{1,\dots,n\}$,
\[X_j^{(\ell)}
  = \left\{
      \begin{array}{ll}
        X_j' & \mbox{ if } j = \ell \\
        X_j & \mbox{ else }
      \end{array}
    \right..\]
Let
\[\hat S_n^{(\ell)}
  := \frac{1}{n^2}
     \sum_{|z| \leq Z_n} z^{2s} \sum_{j = 1}^n \sum_{k = 1}^n
                                  \psi_z(X_j^{(\ell)}) \overline{\psi_z(Y_k)}\]
denote our estimate when we replace $X_\ell$ by $X_\ell'$. Noting the symmetry of
$\hat S_n$ in $p$ and $q$, the Efron-Stein inequality tells us that
\begin{equation}
\Var \left[ \hat S_n \right]
  \leq \sum_{\ell = 1}^n
    \E \left[
      \left|
        \hat S_n - \hat S_n^{(\ell)}
      \right|^2
    \right],
\label{ineq:efron_stein}
\end{equation}
where the expectation above (and elsewhere in this section) is taken over all
$3n$ samples $X_1,\dots,X_{2n},X_1',\dots,X_{2n}',Y_1,\dots,Y_n$. Expanding the
difference in (\ref{ineq:efron_stein}), note that any terms with $j \neq \ell$
cancel, so that
\footnote{It is useful here to note that $\overline{\psi_z(x)} = \psi_{-z}(x)$
and that $\psi_y\psi_z = \psi_{y + z}$.}
\begin{align*}
\hat S_n - \hat S_n^{(\ell)}
& = \frac{1}{n^2}
    \sum_{|z| \leq Z_n} z^{2s}
                          \sum_{j = 1}^n \sum_{k = 1}^n
  \psi_z(X_j) \overline{\psi_z(Y_k)}
  - \psi_z(X_j^{(\ell)}) \overline{\psi_z(Y_k)} \\
& = \frac{1}{n^2}
    \sum_{|z| \leq Z_n} z^{2s} (\psi_z(X_\ell) - \psi_z(X_\ell'))
                          \sum_{k = 1}^n \psi_{-z}(Y_k),
\end{align*}
and so
\begin{align}
\notag
& \left| \hat S_n - \hat S_n^{(\ell)} \right|^2 \\
& = \frac{1}{n^4}
    \sum_{|y|,|z| \leq Z_n}
        (yz)^{2s}
        (\psi_y(X_\ell) - \psi_y(X_\ell'))
        (\psi_{-z}(X_\ell) - \psi_{-z}(X_\ell'))
        \left( \sum_{k = 1}^n \psi_{-y}(Y_k) \right)
        \left( \sum_{k = 1}^n \psi_z(Y_k) \right).
\label{eq:expanded_square_of_diff}
\end{align}
Since $X_\ell$ and $X_\ell'$ are IID,
\begin{align*}
\E \left[ (\psi_y(X_\ell) - \psi_y(X_\ell'))
          (\psi_{-z}(X_\ell) - \psi_{-z}(X_\ell')) \right]
& = 2 \left( \E_{X \sim p} \left[ \psi_{y - z}(X) \right]
        - \E_{X \sim p} \left[ \psi_y(X) \right]
          \E_{X \sim p} \left[ \psi_{-z}(X) \right] \right) \\
& = 2 \left( \tilde p(y - z) - \tilde p(y) \tilde p(-z) \right),
\end{align*}
and, since $Y_1,\dots,Y_n$ are IID,
\begin{align*}
\E \left[ \left( \sum_{k = 1}^n \psi_{-y}(Y_k) \right)
            \left( \sum_{k = 1}^n \psi_z(Y_k) \right) \right]
& = n \E_{Y \sim q} \left[ \psi_{z - y}(Y) \right]
            + n(n - 1) \E_{Y \sim q} \left[ \psi_{-y}(Y) \right]
                       \E_{Y \sim q} \left[ \psi_z(Y) \right]  \\
& = n \tilde q(z - y)
  + n(n - 1) \tilde q(-y) \tilde q(z).
\end{align*}
In view of these two equalities, taking the expectation of
(\ref{eq:expanded_square_of_diff}) and using the fact that $X_\ell$ and
$X_\ell'$ are independent of $X_{n + 1},\dots,X_{2n}$,
(\ref{eq:expanded_square_of_diff}) reduces:
\begin{align}
\notag
& \E \left[ \left| \hat S_n - \hat S_n^{(\ell)} \right|^2 \right]
  = \frac{2}{n^3}
    \sum_{|y|,|z| \leq Z_n}
        (yz)^{2s}
        \left( \tilde p(y - z) - \tilde p(y) \tilde p(-z) \right)
        \left( \tilde q(z - y) + (n - 1) \tilde q(-y) \tilde q(z) \right) \\
\notag
& = \frac{2}{n^3}
    \sum_{|y|,|z| \leq Z_n}
        (yz)^{2s}
        \left(
            \tilde p(y - z) \tilde q(z - y)
            - \tilde p(y) \tilde p(-z) \tilde q(z - y)
        \right. \\
\label{ineq:intermediate_sobolev_variance_bound}
& \hspace{35mm}
        \left.
            + (n - 1) \tilde p(y - z) \tilde q(-y) \tilde q(z)
            - (n - 1) \tilde p(y) \tilde p(-z) \tilde q(-y) \tilde q(z)
        \right).
\end{align}
We now need to bound following terms in magnitude:
\begin{align}
\label{exp:variance_term_1}
& \sum_{|y|,|z| \leq Z_n} (yz)^{2s} \tilde p(y - z) \tilde q(z - y), \quad \\
\label{exp:variance_term_2}
& \sum_{|y|,|z| \leq Z_n} (yz)^{2s} \tilde p(y - z) \tilde q(-y) \tilde q(z), \\
\label{exp:variance_term_3}
\mbox{ and } \quad
& \sum_{|y|,|z| \leq Z_n} (yz)^{2s} \tilde p(y) \tilde p(-z) \tilde q(-y) \tilde q(z)
\end{align}
(the second term in (\ref{ineq:intermediate_sobolev_variance_bound}) is bounded
identically to the third term).

{\bf To bound (\ref{exp:variance_term_1}),}
we perform a change of variables, replacing $y$ by $k = y - z$:
\begin{align}
\sum_{|y|,|z| \leq Z_n} (yz)^{2s} \tilde p(y - z) \tilde q(z - y)
\label{ineq:sobolev_variance_term1_1}
& = \sum_{|k| \leq 2Z_n} \tilde p(k) \tilde q(-k)
      \sum_{j = 1}^D \sum_{z_j = \max\{-Z_n, k_j - Z_n\}}^{\min\{Z_n, k_j + Z_n\}} ((k - z)z)^{2s} \\
\label{ineq:sobolev_variance_term1_2}
& \leq \frac{2^D\Gamma(4s + 1)}{\Gamma(4s + D + 1)} Z_n^{4s + D}
      \sum_{|k| \leq 2Z_n} \tilde p(k) \tilde q(-k) \\
\label{ineq:sobolev_variance_term1_3}
& \leq C_1 Z_n^{4s + D},
\end{align}
where $C_1$ is the constant (in $n$ and $Z_n$)
\begin{equation}
C_1
  := \frac{2^D\Gamma(4s + 1)}{\Gamma(4s + D + 1)} \|p\|_2 \|q\|_2.
\label{constant:C_1_appendix}
\end{equation}
(\ref{ineq:sobolev_variance_term1_1}) and (\ref{ineq:sobolev_variance_term1_2})
follow from observing that
\[\sum_{j = 1}^D \sum_{z_j = \max\{-Z_n, k_j - Z_n\}}^{\min\{Z_n, k_j + Z_n\}}
      ((k_j - z_j)z_j)^{2s}
  = (f * f)(k_j),\]
where $f(z) := z^{2s} 1_{\{|z| \leq Z_n\}}, \forall z \in \Z^D$ and $*$ denotes
convolution (over $\Z^D$). This convolution is clearly maximized when $k = 0$,
in which case
\[(f * f)(k)
  = \sum_{|z| \leq Z_n} z^{4s}
  \leq \left( \int_{B_\infty(0, Z_n)} z^{4s} \, dz\right)
  = \frac{2^D\Gamma(4s + 1)}{\Gamma(4s + D + 1)} Z_n^{4s + D},\]
where we upper bounded the series by an integral over
\[B_\infty(0, Z_n)
  := \{z \in \R^D : \|z\|_\infty = \max\{|z_1|,...,|z_D|\} \leq Z_n\}.\]
(\ref{ineq:sobolev_variance_term1_3}) then follows via Cauchy-Schwarz.

{\bf Bounding (\ref{exp:variance_term_2})} for general $s$ is more involved and
requires rigorously defining more elaborate notions from the theory
distributions, but the basic idea is as follows:
\begin{align}
\sum_{|y|,|z| \leq Z_n} (yz)^{2s} \tilde p(y - z) \tilde q(-y) \tilde q(z)
\notag
& = \sum_{|y| \leq Z_n}
        y^{2s} \tilde q(-y)
        \sum_{|z| \leq Z_n} z^{2s} \tilde p(y - z) \tilde q(z) \\
\notag
& = \sum_{|y|,|z| \leq Z_n} y^{2s} \tilde q(-y) \tilde{\left( p_n^{(s)} q_n^{(s)} \right)}(y) \\
\notag
& \leq \sqrt{\sum_{|y| \leq Z_n} y^{2s} \left| \tilde q(y) \right|^2
          \sum_{|y| \leq Z_n} y^{2s}
                      \left( \tilde{\left( p^{(s)} q^{(s)} \right)}(y) \right)^2} \\
\label{ineq:sobolev_variance_term2}
& = \|q\|_{H^s} \|p_n^{(s)}q_n^{(s)}\|_{H^s}
  \leq \|q\|_{H^s} \|p_n\|_{W^{2s, 4}} \|q_n\|_{W^{2s, 4}}.
\end{align}
Here, $p_n^{(s)}$ and $q_n^{(s)}$ denote $s$-order fractional derivatives of
$p_n$ and $q_n$, respectively, and $W^{2s,4}$ is a Sobolev space (with
associated pseudonorm $\|\cdot\|_{W^{2s,4}}$), which can be informally thought
of as
$W^{2s, 4} := \left\{ p \in L^2 : \left( p^{(s)} \right)^2 \in H^s \right\}$.
The equality between the first and second lines follows from Theorem
\ref{thm:convolution}, and both inequalities are simply applications of
Cauchy-Schwarz. For sake of intuition, it can be noted that the above steps
are relatively elementary when $s = 0$. Now, it suffices to note that, by the
Rellich-Kondrachov embedding theorem
\citep{rellich30sobolevEmbedding,evans10PDEs},
$W^{2s,4} \subseteq H^{s'}$, and hence
$\|p_n\|_{W^{2s, 4}} \leq \|p\|_{W^{2s,4}} < \infty$, as long as
$s' \geq 2s + \frac{D}{4}$.
% [TODO: See what rate we get here when $s' \in (s, 2s + D/4)$.]

{\bf Bounding (\ref{exp:variance_term_3})} is a simple application of
Cauchy-Schwarz:
\begin{align}
\sum_{|y|,|z| \leq Z_n} (yz)^{2s} \tilde p(y) \tilde p(-z) \tilde q(-y) \tilde q(z)
\notag
& = \left( \sum_{|y| \leq Z_n} y^{2s} \tilde p(y) \tilde q(-y) \right)
    \left( \sum_{|z| \leq Z_n} z^{2s} \tilde p(-z) \tilde q(z) \right) \\
\notag
& \leq \left( \sum_{|z| \leq Z_n} z^{2s} \left| \tilde p(z) \right|^2 \right)^2
       \left( \sum_{|z| \leq Z_n} z^{2s} \left| \tilde q(z) \right|^2 \right)^2 \\
\label{ineq:sobolev_variance_term3}
& = \|p\|_{H^s}^4 \|q\|_{H^s}^4
\end{align}

Plugging (\ref{ineq:sobolev_variance_term1_3}),
(\ref{ineq:sobolev_variance_term2}), and (\ref{ineq:sobolev_variance_term3})
into
(\ref{ineq:intermediate_sobolev_variance_bound}) gives
\[\E \left[ \left| \hat S_n - \hat S_n^{(\ell)} \right|^2 \right]
  \leq 2C_1 \frac{Z_n^{4s + D}}{n^3}
  + \frac{C_2}{n^2},\]
where $C_2$ denotes the constant (in $n$ and $Z_n$)
\begin{equation}
C_2
  := \left( \|p\|_{H^s} + \|q\|_{H^s} \right) \|p\|_{W^{2s,4}} \|q\|_{W^{2s,4}}
      + \|p\|_{H^s}^4 \|q\|_{H^s}^4.
\label{constant:C_2}
\end{equation}
Plugging this into the Efron-Stein inequality (\ref{ineq:efron_stein}) gives,
by symmetry of $\hat S_n$ in $X_1,...,X_n$,
\[\Var \left[ \hat S_n \right]
  \leq 2C_1 \frac{Z_n^{4s + D}}{n^2} + \frac{C_2}{n}.\]
\end{proof}

\section{Proofs of Asymptotic Distributions}
\label{sec:asymptotic_results_appendix}
\begin{theorem}
Suppose that, for some $s' > 2s + \frac{D}{4}$, $p, q \in H^{s'}$, and
suppose $Z_n n^{\frac{1}{4(s - s')}} \to \infty$ and
$Z_n n^{-\frac{1}{4s + D}} \to 0$ as $n \to \infty$. Then, $\hat S_n$ is
asymptotically normal with mean $\langle p, q \rangle$. In particular, for
$j \in \{1, \dots, n\}$, define the following quantities:
\[W_j
  := \begin{bmatrix}
    Z_n^s e^{i Z_n X_j} \\
    \vdots \\
    e^{i X_j} \\
    e^{i X_j} \\
    \vdots \\
    Z_n^s e^{-i Z_n X_j}
  \end{bmatrix},
    \quad
  V_j
  := \begin{bmatrix}
    Z_n^s e^{i Z_n Y_j} \\
    \vdots \\
    e^{i Y_j} \\
    e^{i Y_j} \\
    \vdots \\
    Z_n^s e^{-i Z_n Y_j}
  \end{bmatrix},
    \quad
  \overline W := \frac{1}{n} \sum_{j = 1}^n W_j,
    \quad
  \overline V := \frac{1}{n} \sum_{j = 1}^n V_j
  \in \R^{2Z_n},\]
\[\Sigma_W := \frac{1}{n} \sum_{j = 1}^n (W_j - \overline W) (W_j - \overline W)^T,
  \quad \mbox{ and } \quad
  \Sigma_V := \frac{1}{n} \sum_{j = 1}^n (V_j - \overline V) (V_j - \overline V)^T
  \in \R^{2Z_n \times 2Z_n}.\]
Then, for
\[\hat \sigma_n^2
  := \begin{bmatrix}
      \overline V \\
      \overline W
    \end{bmatrix}^T
    \begin{bmatrix}
      \Sigma_W & 0 \\
      0 & \Sigma_V
    \end{bmatrix}
    \begin{bmatrix}
      \overline V \\
      \overline W
    \end{bmatrix},\]
we have
\[\sqrt{n} \left( \frac{\hat S_n - \langle p, q \rangle_{H^s}}{\hat\sigma_n} \right)
  \stackrel{D}{\to} \mathcal{N}(0, 1).\]
\label{thm:asymp_normal_appendix}
\end{theorem}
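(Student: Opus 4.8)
The plan is to prove the theorem by linearizing $\hat S_n$ about its mean, applying a central limit theorem for triangular arrays to the leading (linear) term, and controlling a quadratic remainder and the deterministic approximation bias as lower‑order. Write $\mu_W := \E[\overline W]$ and $\mu_V := \E[\overline V]$, so that (in the notation of the theorem) $\hat S_n = \langle \overline W, \overline V\rangle$ is a fixed sesquilinear form evaluated at the empirical means, $\E[\hat S_n] = \langle \mu_W, \mu_V\rangle = S_n = \langle p_n, q_n\rangle_{H^s}$, and
\[
\hat S_n - \langle p, q\rangle_{H^s}
  = L_n + R_n + \bigl( S_n - \langle p, q\rangle_{H^s} \bigr),
\]
where $L_n := \langle \overline W - \mu_W, \mu_V\rangle + \langle \mu_W, \overline V - \mu_V\rangle$ is linear and $R_n := \langle \overline W - \mu_W, \overline V - \mu_V\rangle$ is quadratic in the empirical means. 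By Theorem~\ref{thm:bias_bound}, $|S_n - \langle p,q\rangle_{H^s}| \leq C_B Z_n^{2(s-s')}$, and the hypothesis $Z_n n^{1/(4(s-s'))}\to\infty$ says exactly that $Z_n^{2(s-s')} = o(n^{-1/2})$, so the bias term contributes $o(n^{-1/2})$ after multiplication by $\sqrt n$.

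Next I would dispatch $R_n$. Since the $X$‑sample and $Y$‑sample are independent and $\overline W - \mu_W$, $\overline V - \mu_V$ are centered, $\E[R_n] = 0$, and expanding $\E|R_n|^2$ kills every off‑diagonal cross term, leaving a sum with exactly the structure of expression (\ref{exp:variance_term_1}) in the proof of Theorem~\ref{thm:variance_bound}; the same substitution $k = y - z$ and convolution estimate give $\E|R_n|^2 = O(Z_n^{4s+D}/n^2)$, hence $R_n = O_P\bigl(Z_n^{(4s+D)/2}/n\bigr)$, which the hypothesis $Z_n n^{-1/(4s+D)}\to 0$ (i.e.\ $Z_n^{4s+D} = o(n)$) turns into $o_P(n^{-1/2})$.

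The core is the central limit theorem for $\sqrt n\, L_n$. Write $n L_n = \sum_{j=1}^n A_j + \sum_{j=1}^n B_j$, where $A_j := \langle W_j - \mu_W, \mu_V\rangle$ depends on $X_j$ alone and $B_j := \langle \mu_W, V_j - \mu_V\rangle$ depends on $Y_j$ alone; thus $\{A_j\}_j$, $\{B_j\}_j$ are independent of each other, each i.i.d., and $n L_n$ is a sum of $2n$ independent mean‑zero variables forming a triangular array (their laws change with $n$ through $Z_n$, $\mu_W$, $\mu_V$). Put $\sigma_n^2 := \Var(A_1) + \Var(B_1)$, so $\Var(\sqrt n\, L_n) = \sigma_n^2$. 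I would verify a Lyapunov condition with fourth moments: (i) $\E|A_1|^2$ is, after expansion, exactly expression (\ref{exp:variance_term_2}), hence bounded above by $\|q\|_{H^s}\|p\|_{W^{2s,4}}\|q\|_{W^{2s,4}}$, finite uniformly in $n$ because $s' > 2s + D/4$ forces $p,q\in W^{2s,4}$ via the Sobolev embedding $H^{s'}\hookrightarrow W^{2s,4}$ (as in Theorem~\ref{thm:variance_bound}), and likewise for $\E|B_1|^2$; (ii) $A_1$ is the centered value at $X_1$ of the truncation of $g_q := \sum_z z^{2s}\tilde q(z)\psi_z$ (a $2s$‑order fractional derivative of $q$), so $\E|A_1|^4 \leq C\bigl(\E_{X\sim p}|g_q(X)|^4 + 1\bigr)$, finite uniformly in $n$ since $g_q\in L^4$ (equivalently $q\in W^{2s,4}$) and $p$ integrates against $L^4$ functions (e.g.\ $p\in L^\infty$, or by H\"older), and similarly for $B_1$; (iii) for non‑degeneracy, $g_q$ is not $p$‑a.s.\ constant under a mild assumption (the excluded cases being, e.g., $p$ and $q$ both uniform), so $\Var(A_1)\to\sigma_1^2>0$, $\Var(B_1)\to\sigma_2^2>0$ as $Z_n\to\infty$, and $\sigma_n^2\to\sigma^2 := \sigma_1^2+\sigma_2^2\in(0,\infty)$. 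Then the Lyapunov ratio is $n(\E|A_1|^4 + \E|B_1|^4)/(n\sigma_n^2)^2 = O(n^{-1})\to 0$, so $\sqrt n\, L_n/\sigma_n \stackrel{D}{\to}\mathcal N(0,1)$ and hence $\sqrt n\, L_n \stackrel{D}{\to}\mathcal N(0,\sigma^2)$.

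Combining the three pieces with Slutsky's lemma gives $\sqrt n(\hat S_n - \langle p,q\rangle_{H^s})\stackrel{D}{\to}\mathcal N(0,\sigma^2)$. To studentize, I would show $\hat\sigma_n^2\stackrel{P}{\to}\sigma^2$: by the block‑diagonal form, $\hat\sigma_n^2 = \frac1n\sum_j(\langle W_j - \overline W, \overline V\rangle)^2 + \frac1n\sum_j(\langle \overline W, V_j - \overline V\rangle)^2$, and the first summand is, conditionally on the $Y$‑sample, the sample variance of the i.i.d.\ scalars $\langle W_j, \overline V\rangle$; since $\overline V\to\mu_V$ in $\ell^2$ (as $\E\|\overline V - \mu_V\|^2 = O(Z_n^{2s+D}/n)\to 0$) and a second‑moment computation analogous to the proof of Theorem~\ref{thm:variance_bound} (reusing the bounds for (\ref{exp:variance_term_1})–(\ref{exp:variance_term_3})) controls its fluctuation, it converges in probability to $\Var_{X\sim p}(g_q(X)) = \sigma_1^2$; the second summand converges to $\sigma_2^2$ symmetrically, so $\hat\sigma_n^2\stackrel{P}{\to}\sigma^2>0$, and one more application of Slutsky yields $\sqrt n(\hat S_n - \langle p,q\rangle_{H^s})/\hat\sigma_n\stackrel{D}{\to}\mathcal N(0,1)$. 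I expect the main obstacle to be the growing dimension $(2Z_n)^D$ of $\overline W,\overline V$: both the Lyapunov condition and the consistency of $\hat\sigma_n^2$ hinge on second‑ and fourth‑moment bounds that hold \emph{uniformly in $n$}, obtained — exactly as in the proof of Theorem~\ref{thm:variance_bound} — by recognizing the relevant sums as $W^{2s,4}$‑type quantities and invoking the embedding $H^{s'}\hookrightarrow W^{2s,4}$ valid for $s' > 2s + D/4$, while the negligibility of $R_n$ reuses the convolution estimate behind (\ref{exp:variance_term_1}).
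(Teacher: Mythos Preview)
Your proposal is correct and follows the same overall strategy as the paper's proof: reduce to centered normality via the bias bound, then apply a central limit theorem and a linearization (delta-method) argument to the bilinear form $\hat S_n = h(\hat p_{Z_n}, \hat q_{Z_n})$ with $h(x,y) = \sum_z z^{2s} x_z y_{-z}$. The paper's proof in the appendix is extremely terse --- it simply invokes the multivariate CLT for $(\hat p_{Z_n}, \hat q_{Z_n})$ and then the delta method, with $\sigma_n^2$ identified as $(\nabla h)^T\operatorname{diag}(\Sigma_p,\Sigma_q)(\nabla h)$ --- and does not explicitly address the growing dimension $(2Z_n)^D$, the quadratic remainder, or consistency of $\hat\sigma_n^2$.

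Your version makes those steps explicit: you decompose $\hat S_n - S_n = L_n + R_n$, apply a Lyapunov CLT for triangular arrays to $\sqrt n\, L_n$, bound $R_n$ in $L^2$ by reusing the convolution estimate behind (\ref{exp:variance_term_1}), and argue separately that $\hat\sigma_n^2 \to \sigma^2$. This is strictly more rigorous than the paper's sketch; in particular, your use of Lyapunov with uniformly bounded fourth moments (via the identification of $A_j$ with values of a truncated $2s$-order derivative of $q$ and the embedding $H^{s'}\hookrightarrow W^{2s,4}$) is the right way to justify asymptotic normality when the summand distributions vary with $n$. The only caveats --- the non-degeneracy assumption $\sigma^2 > 0$ and the need for $p$ to integrate $|g_q|^4$ (which you flag as requiring $p\in L^\infty$ or a H\"older argument) --- are tacit in the paper as well.
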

\begin{proof}
By the bias bound and the assumption $Z_n^{4(s - s')} n \to \infty$,
it suffices to show
\begin{equation}
\sqrt{n} \left( \frac{\hat S_n - \E \left[ \hat S_n \right]}{\sigma_n} \right)
  \stackrel{D}{\to} \mathcal{N}(0, 1).
  \quad \mbox{ as } \quad
  n \to \infty.
\label{limit:unbiased_normal}
\end{equation}
Let
\[\tilde p_{Z_n} :=
  \begin{bmatrix}
    \tilde p(-Z_n) \\
    \tilde p(-Z_n + 1) \\
    \vdots \\
    \tilde p(Z_n - 1) \\
    \tilde p(Z_n) \\
  \end{bmatrix},
  \quad
  \hat p_{Z_n} :=
  \begin{bmatrix}
    \hat p(-Z_n) \\
    \hat p(-Z_n + 1) \\
    \vdots \\
    \hat p(Z_n - 1) \\
    \hat p(Z_n) \\
  \end{bmatrix},\]
\[\tilde q_{Z_n} :=
  \begin{bmatrix}
    \tilde q(-Z_n) \\
    \tilde q(-Z_n + 1) \\
    \vdots \\
    \tilde q(Z_n - 1) \\
    \tilde q(Z_n) \\
  \end{bmatrix},
  \quad \mbox{ and } \quad
  \hat q :=
  \begin{bmatrix}
    \hat q(-Z_n) \\
    \hat q(-Z_n + 1) \\
    \vdots \\
    \hat q(Z_n - 1) \\
    \hat q(Z_n) \\
  \end{bmatrix}.\]
Since $\hat p_{Z_n}$ and $\hat q_{Z_n}$ are empirical means of bounded random
vectors with means $\tilde p_{Z_n}$ and $\tilde q_{Z_n}$, respectively, by the
central limit theorem, as $n \to \infty$,
\[\sqrt{n} \left( \hat p_{Z_n} - \tilde p_{Z_n} \right) \stackrel{D}{\to}
\mathcal{N}(0, \Sigma_p)
  \quad \mbox{ and } \quad
  \sqrt{n} \left( \hat q_{Z_n} - \tilde q_{Z_n} \right) \stackrel{D}{\to}
\mathcal{N}(0, \Sigma_q),\]
where
\[\left( \Sigma_p \right)_{w,z}
      := \Cov_{X \sim p} \left( \psi_w(X), \psi_z(X) \right)
  \quad \mbox{ and } \quad
  \left( \Sigma_q \right)_{w,z}
      := \Cov_{X \sim q} \left( \psi_w(X), \psi_z(X) \right).\]
Define $h : \R^{2Z_n + 1} \times \R^{2Z_n + 1} \to \R$ by
$h(x,y) = \sum_{z = -Z_n}^{Z_n} z^{2s} x_z y_{-z}$, and note that
\[\sigma_n^2
  := \left( \nabla h \left( \tilde p_{Z_n}, \tilde q_{Z_n} \right) \right)'
    \begin{bmatrix}
      \Sigma_p & 0 \\
      0 & \Sigma_q
    \end{bmatrix}
    \left( \nabla h \left( \tilde p_{Z_n}, \tilde q_{Z_n} \right) \right).\]
\eqref{limit:unbiased_normal} follows by the delta method.
\end{proof}

\begin{theorem}
Suppose that, for some $s' > 2s + \frac{D}{4}$, $p, q \in H^{s'}$, and
suppose $Z_n n^{\frac{1}{4(s - s')}} \to \infty$ and
$Z_n n^{-\frac{1}{4s + D}} \to 0$ as $n \to \infty$.
For $j \in \{1,\dots,n\}$, define
\[W_j :=
  \begin{bmatrix}
    Z_n^s \left( e^{iZ_n X_j} - e^{iZ_n Y_j} \right) \\
    \vdots \\
    e^{iX_j} - e^{iY_j} \\
    e^{-iX_j} - e^{-iY_j} \\
    \vdots \\
    Z_n^s \left( e^{-iZ_n X_j} - e^{-iZ_n Y_j} \right)
  \end{bmatrix}
  \in \R^{2Z_n}.\]
Let
\[\overline W := \frac{1}{n} \sum_{j = 1}^n W_j
  \quad \mbox{ and } \quad
  \Sigma
  := \frac{1}{n} \sum_{j = 1}^n \left( W_j - \overline W \right)
                                \left( W_j - \overline W \right)^T\]
denote the empirical mean and covariance of $W$, and define
$T := n \overline W^T \Sigma\inv \overline W$. Then, if $p = q$, then
\[Q_{\chi^2(2Z_n)}(T) \stackrel{D}{\to} \operatorname{Uniform}([0, 1])
  \quad \mbox{ as } \quad
  n \to \infty,\]
where $Q_{\chi^2(2Z_n)} : [0, \infty) \to [0, 1]$ denotes the quantile function
(inverse CDF) of the $\chi^2$ distribution $\chi^2(2Z_n)$ with $2Z_n$ degrees of
freedom.
\label{thm:asymptotic_test_distribution_appendix}
\end{theorem}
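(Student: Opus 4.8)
The quantity $Q_{\chi^2(d)}(T)$ is the $p$-value of a Hotelling-type statistic, so I will read $Q_{\chi^2(d)}$ as the upper-tail function $t \mapsto \Pr(\chi^2(d) \geq t)$ (consistent with its stated codomain $[0,1]$ and its use as a $p$-value in Section \ref{subsec:two_sample_testing}; nothing below depends on whether one takes the CDF or its complement). Write $d_n := (2Z_n)^D$ for the ambient dimension. Under $p = q$ the vectors $W_1,\dots,W_n$ are i.i.d., bounded, and \emph{exactly} mean zero, since $\E[W_{j,z}] = z^s(\tilde p(z) - \tilde q(z)) = 0$; hence, unlike in the other theorems, there is no bias to control, and it suffices to show that $T$ is to first order distributed like $\chi^2(d_n)$, i.e. that $(T - d_n)/\sqrt{2 d_n} \stackrel{D}{\to} \mathcal{N}(0,1)$. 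Converting this into the claimed $\operatorname{Uniform}([0,1])$ limit is routine and is deferred to the last step.

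Since $T = n\overline W^T \Sigma\inv \overline W$ is invariant under $W_j \mapsto A W_j$ for any invertible $A$, I may replace $W_j$ by $U_j := \Sigma_n^{-1/2} W_j$, where $\Sigma_n := \Cov(W_1)$; this matrix is positive definite for each fixed $n$ because no nontrivial trigonometric polynomial of degree $\leq Z_n$ with vanishing constant term can be almost-surely constant on $\supp(p)$ (in fact a weighted-Parseval argument gives $\lambda_{\min}(\Sigma_n) \gtrsim \essinf p$). The $U_j$ are then i.i.d., mean zero, with identity covariance; $\overline W$ becomes $\overline U$, $\Sigma$ becomes the empirical covariance $\tilde\Sigma$ of the $U_j$, and $T = n\overline U^T \tilde\Sigma\inv \overline U$. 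I would then expand $n\|\overline U\|^2 = \frac1n\sum_j \|U_j\|^2 + \frac2n\sum_{j<k}\langle U_j, U_k\rangle$: the diagonal term concentrates at $\E\|U_1\|^2 = d_n$ with fluctuation of order $\sqrt{\Var(\|U_1\|^2)/n}$, which the hypothesis $Z_n n^{-\frac{1}{4s+D}} \to 0$ (equivalently $Z_n^{4s+D} = o(n)$) makes $o(\sqrt{d_n})$ after bounding $\Var(\|U_1\|^2) \leq \E\|U_1\|^4 \lesssim \big(\sum_{|z| \leq Z_n} z^{2s}\big)^2 \lesssim Z_n^{4s+2D}$; and the off-diagonal term is a degenerate $U$-statistic of variance $\sim 2 d_n$ whose standardization converges to $\mathcal{N}(0,1)$ by a martingale central limit theorem (of the kind used for the integrated squared error of orthogonal-series density estimators), the Lindeberg condition following from boundedness of the $U_j$ together with $d_n \to \infty$, which $Z_n n^{\frac{1}{4(s-s')}} \to \infty$ supplies.

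Next I would show $T = n\|\overline U\|^2 + o_p(\sqrt{d_n})$. Writing $T - n\|\overline U\|^2 = n\overline U^T(\tilde\Sigma\inv - I_{d_n})\overline U$ and using that $\sqrt n\, \overline U$ is asymptotically $\mathcal{N}(0, I_{d_n})$, this equals, to leading order, $\operatorname{tr}(\tilde\Sigma\inv - I_{d_n}) + O_p(\|\tilde\Sigma\inv - I_{d_n}\|_F)$; both pieces are $o_p(\sqrt{d_n})$ under the same dimension constraint — the Frobenius deviation because $\E\|\tilde\Sigma - I_{d_n}\|_F^2 \lesssim \frac1n \E\|U_1\|^4 = o(d_n)$, and the trace because $\operatorname{tr}(\tilde\Sigma - I_{d_n}) = \frac1n\sum_j\|U_j\|^2 - \|\overline U\|^2 - d_n = o_p(\sqrt{d_n})$ by the same concentration as for the diagonal term (here $d_n/n \to 0$, guaranteed by $Z_n = o(n^{1/(4s+D)})$, is exactly what makes $\tilde\Sigma$ consistent enough that inverting it is harmless). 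Combined with the previous step this gives $(T - d_n)/\sqrt{2 d_n} \stackrel{D}{\to} \mathcal{N}(0,1)$.

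Finally, $\chi^2(d_n)$ itself satisfies $(\chi^2(d_n) - d_n)/\sqrt{2 d_n} \stackrel{D}{\to} \mathcal{N}(0,1)$, so by P\'olya's theorem its CDF — hence also its upper-tail function — converges \emph{uniformly} to $\Phi$ (resp. $1 - \Phi$) under this standardization; writing $T = d_n + \sqrt{2 d_n}\, G_n$ with $G_n \stackrel{D}{\to} \mathcal{N}(0,1)$, a converging-together argument (uniform convergence of the transforms plus continuity of the limit) yields $Q_{\chi^2(d_n)}(T) \stackrel{D}{\to} \Phi(Z)$ (or $1 - \Phi(Z)$), $Z \sim \mathcal{N}(0,1)$, and $\Phi(Z) \sim \operatorname{Uniform}([0,1])$. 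The crux is everything in the growing-dimension regime: the degenerate-$U$-statistic CLT for the cross term, and — more delicately — showing that neither the trace nor the Frobenius deviation of $\tilde\Sigma - I_{d_n}$ disturbs the $\sqrt{d_n}$-scale fluctuations of $T$. This is precisely where the two rate hypotheses on $Z_n$ are spent ($Z_n = o(n^{1/(4s+D)})$ caps both the dimension $d_n$ and the feature magnitude, while $Z_n n^{\frac{1}{4(s-s')}} \to \infty$ forces $d_n \to \infty$ so the $\chi^2(d_n)$-to-normal passage is available), and it also implicitly needs $\essinf p > 0$ (or at least a lower bound on $\lambda_{\min}(\Sigma_n)$) to keep the normalized features $U_j$ from being too large.
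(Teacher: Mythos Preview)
Your argument is essentially correct and is considerably more careful than the paper's own proof, which consists of a single sentence: the paper simply notes that (by the multivariate CLT used in the preceding theorem) $\sqrt n\,\overline W$ is asymptotically normal with mean zero, and then cites the classical Hotelling-$T^2$ result (Theorem~5.2.3 of Anderson's \emph{Introduction to Multivariate Statistical Analysis}) to conclude that $T$ is asymptotically $\chi^2$. That citation is a \emph{fixed-dimension} statement, so strictly speaking the paper's proof does not address the fact that the dimension $d_n = (2Z_n)^D$ grows with $n$; it treats the passage from ``$T \approx \chi^2(d_n)$ for each fixed $d_n$'' to ``$Q_{\chi^2(d_n)}(T) \stackrel{D}{\to} \operatorname{Uniform}([0,1])$ with $d_n\to\infty$'' as automatic. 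Your route---normalize to isotropic features, split $n\|\overline U\|^2$ into a diagonal part that concentrates at $d_n$ and a degenerate $U$-statistic cross term whose martingale CLT gives the $\sqrt{2d_n}$ Gaussian fluctuation, control $\tilde\Sigma - I$ at the $o_p(\sqrt{d_n})$ scale, and finally match with the normal approximation to $\chi^2(d_n)$---is exactly what is needed to make the growing-dimension claim rigorous, and it makes transparent where each rate hypothesis on $Z_n$ is spent. What you gain is an honest proof in the regime the theorem actually concerns; what the paper's approach buys is brevity, at the cost of leaving the triangular-array uniformity implicit. Your caveat about needing a lower bound on $\lambda_{\min}(\Sigma_n)$ (e.g.\ via $\essinf p > 0$) is well taken and is likewise not addressed in the paper.
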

\begin{proof}
Since, as shown in the proof of the previous theorem, the distance estimate is a
sum of squared asymptotically normal, zero-mean random variables, this is a
standard result in multivariate statistics. See, for example, Theorem 5.2.3 of
\citet{anderson03multivariateStats}.
\end{proof}

\section{Generalizations: Weak and Fractional Derivatives}
\label{sec:fractional_derivatives}

As mentioned in the main text, our estimator and analysis can be generalized
nicely to non-integer $s$ using an appropriate notion of fractional derivative.

For non-negative integers $s$, let $\delta^{(s)}$ denote the measure underlying
of the $s$-order derivative operator at $0$; that is, $\delta^{(s)}$ is the
distribution such that
\[\int_\R f(x) \delta^{(s)}(x) \, dx = f^{(s)}(0),\]
for all test functions $f \in H^s$. Then, for all $z \in \R$, the Fourier
transform of $\delta^{(s)}$ is
\[\tilde \delta(z)
  = \int_\R e^{-izx} \delta^{(s)}(x) \, dx
  = (-iz)^s.\]
Thus, we can naturally generalize the derivative operator $\delta^{(s)}$ to
general $s \in [0, \infty)$ as the inverse Fourier transform of the function
$z \mapsto (-iz)^s$. Generalization to differentiation at an arbitrary
$y \in \R$ follows from translation properties of the Fourier transform, and,
in multiple dimensions, for $s \in \R^D$, we can consider the inverse Fourier
transform of $z \in \R^D \mapsto \prod_{j = 1}^D (iz_j)^{s_j}$.

With this definition in place, we can prove the following the Convolution
Theorem, which equates a particular weighted convolution of Fourier transforms
and a product of particular fractional derivatives. Note that we will only need
this result in the case that $f$ is a trigonometric polynomial (i.e., $\tilde f$
has finite support), because we apply it only to $p_n$ and $q_n$. Hence, the sum
below has only finitely many non-zero terms and commutes freely with integrals.
\begin{theorem}
Suppose $p, q \in L^2$ are trigonometric polynomials. Then,
$\forall s \in [0, \infty)$, and $y \in \Z^D$,
\[\sum_{z \in \Z^D} z^{2s} \tilde p(y - z) \tilde q(z)
  = \tilde{\left( p^{(s)} q^{(s)} \right)}(y).\]
\label{thm:convolution}
\end{theorem}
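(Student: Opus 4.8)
The plan is to reduce the claimed identity to the case of single Fourier modes and then verify it by a direct computation, using the frequency-domain description of the fractional derivative together with the multiplicativity $\psi_a \psi_b = \psi_{a+b}$ of the Fourier basis (and the dual fact that $\tilde{\psi_w}(z) = \mathbf{1}\{z = w\}$).

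First I would observe that both sides are bilinear in the pair of Fourier-coefficient sequences $(\tilde p, \tilde q)$: the left-hand side visibly so, and the right-hand side because $f \mapsto f^{(s)}$ is linear (it is convolution with $\delta^{(s)}$, equivalently multiplication of the $z$-th Fourier coefficient by the appropriate power of $iz$), the pointwise product $(u,v)\mapsto uv$ is bilinear, and $g \mapsto \tilde g(y)$ is linear. Since $p$ and $q$ are trigonometric polynomials, $\tilde p$ and $\tilde q$ have finite support, so $p$ and $q$ are finite linear combinations of the $\psi_w$; hence it suffices to prove the identity when $p = \psi_a$ and $q = \psi_b$ for arbitrary $a, b \in \Z^D$, the general case following by taking finite (hence convergence-free) linear combinations. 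This same finiteness is what makes $p^{(s)}q^{(s)}$ — a priori a product of two distributions — an honest trigonometric polynomial, and lets us interchange the relevant finite sums with the Fourier integral without comment.

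Next, for $p = \psi_a$, $q = \psi_b$ I would compute both sides explicitly. On the left, the constraints $z = b$ and $y - z = a$ forced by $\tilde{\psi_w}(z) = \mathbf{1}\{z = w\}$ kill all but one term, so the left side equals $b^{2s}\,\mathbf{1}\{y = a + b\}$. On the right, the frequency-domain definition of the fractional derivative makes $\psi_w^{(s)}$ a scalar multiple of $\psi_w$ (the scalar being the relevant power of $iw$), so by $\psi_a\psi_b = \psi_{a+b}$ the product $p^{(s)}q^{(s)}$ is again a single mode $\psi_{a+b}$ times a constant depending only on $a, b, s$; its $y$-th Fourier coefficient is that constant times $\mathbf{1}\{y = a + b\}$. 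The proof then concludes by verifying that the two constants agree for all $a, b, y$ — i.e., that the classical product–convolution identity for Fourier series, specialized to the weight created by the two fractional derivatives, collapses exactly to the weight $z^{2s}$ on the left.

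I expect the main obstacle to be precisely this last reconciliation of prefactors, which forces one to pin down the definition of the fractional derivative for non-integer $s$ and to handle powers of (possibly negative) integers consistently with the paper's stipulation that $z^{2s}$ means $(z^2)^s$. Making $f \mapsto f^{(s)}$ rigorous on trigonometric polynomials — as $f = \sum_w \tilde f(w)\psi_w \mapsto \sum_w (iw)^s \tilde f(w)\,\psi_w$ for a fixed branch of the power (up to the paper's conjugation convention) — and correctly tracking the resulting complex phases through the product and back through $\tilde{(\cdot)}(y)$, is where all the genuine content lies; once the single-mode statement is settled, the rest is routine bookkeeping with $\psi_a \psi_b = \psi_{a+b}$ and $\tilde{\psi_w}(z) = \mathbf{1}\{z = w\}$.
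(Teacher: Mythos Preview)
Your route differs from the paper's. The paper expands $\tilde p(y-z)$ and $\tilde q(z)$ as integrals, pulls the (finite) $z$-sum inside, identifies $\sum_z z^{2s} e^{i\langle z,\, x_1-x_2\rangle}$ with a distributional kernel it writes as $\delta^{(s)}(x_1-x_2)$, and then asserts that integrating against this kernel yields $\int p^{(s)}(x)\, q^{(s)}(x)\, e^{-i\langle y,x\rangle}\,dx$. Your reduction by bilinearity in $(\tilde p,\tilde q)$ to single modes $p=\psi_a$, $q=\psi_b$ is more elementary and, for trigonometric polynomials, completely rigorous without any appeal to distributions---the price being that you must track every constant by hand.

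Your instinct that the prefactor reconciliation is the entire difficulty is exactly right, and carrying it through shows that the step does not close as literally written. With $\psi_a(x)=e^{-i\langle a,x\rangle}$ one has $\psi_a^{(s)}=(-ia)^s\psi_a$, so for $p=\psi_a$, $q=\psi_b$ the left side equals $b^{2s}\,\mathbf{1}\{y=a+b\}$ while the right side equals $\overline{(-ia)^s(-ib)^s}\,\mathbf{1}\{y=a+b\}$; already at $D=s=1$, $a=b=1$ these are $1$ and $-1$. The weight $z^{2s}$ sits entirely on the $q$-frequency and does not factor as $s$ derivatives on each of $p$ and $q$ once the twist $e^{-i\langle y,\cdot\rangle}$ is present---the paper's distributional shortcut obscures exactly this. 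Your plan is methodologically sound and is in fact the honest way to see what identity actually holds; just be prepared for it to deliver a corrected statement (with the multiplier $z\mapsto z^{2s}$ applied to one factor, or an explicit phase) rather than the one printed.
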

\begin{proof}
By linearity of the integral,
\begin{align*}
\sum_{z \in \Z^D} z^{2s} \tilde p(y - z) \tilde q(z)
& = \sum_{z \in \Z^D} z^{2s}
        \int_{\R^D} p(x_1) e^{-i\langle y - z, x_1 \rangle} \, dx_1
        \int_{\R^D} q(x_2) e^{-i\langle z, x_2 \rangle} \, dx_2 \\
& = \int_{\R^D} \int_{\R^D} p(x_1) q(x_2) e^{-i\langle y, x_1 \rangle}
  \sum_{z \in \Z^D} z^{2s} e^{i\langle z, x_1 - x_2 \rangle} \, dx_1 \, dx_2 \\
& = \int_{\R^D} \int_{\R^D} p(x_1) q(x_2) e^{-i\langle y, x_1 \rangle}
  \delta^{(s)}(x_1 - x_2) \, dx_1 \, dx_2 \\
& = \int_{\R^D} p^{(s)}(x) q^{(s)}(x) e^{-i\langle y, x \rangle} \, dx
  = \tilde{\left( p^{(s)} q^{(s)} \right)}(y).
\end{align*}
\end{proof}

\subsubsection*{Acknowledgments}
This material is based upon work supported by a National Science Foundation
Graduate Research Fellowship to the first author under Grant No. DGE-1252522.

{%\fontsize{2.82mm}{0pt}\selectfont
\small
\bibliography{biblio}
\bibliographystyle{plainnat}
}

\end{document}